\newtheorem{thm}{Theorem}[section]
\newtheorem{lem}[thm]{Lemma}
\newtheorem{cor}[thm]{Corollary}
\newtheorem{rmk}[thm]{Remark}
\newtheorem{ex}[thm]{Example}
\newtheorem{conj}[thm]{Conjecture}
\newcommand{\Y}{\mathcal{Y}}
\newcommand{\B}{\mathcal{B}}
\def\<{\left\langle}
\def\>{\right\rangle}
\newcommand{\call}{{\mathcal P}ic}
\newcommand{\calm}{{\mathcal M}}
\newcommand{\calo}{{\mathcal O}}
\newcommand{\calp}{{\mathcal P}}
\newcommand{\cals}{{\mathcal S}}
\newcommand{\calt}{{\mathcal T}}
\newcommand{\calv}{{\mathcal V}}
\newcommand{\calw}{{\mathcal W}}
\newcommand{\complex}{{\mathbb C}}
\newcommand{\reals}{{\mathbb R}}
\newcommand{\integers}{{\mathbb Z}}
\newcommand{\Aut}{\mbox{$Aut$}}
\newcommand{\ad}{\mbox{$Ad$}}
\newcommand{\aut}{\mbox{$\mathcal{A}ut$}}
\newcommand{\bit}{\rm Btr}
\newcommand{\tor}{\rm Tor}
\title{On the relative dual of an $S^1$-gerbe over an orbifold}
\author{Ilya Shapiro}
\address{Department of Mathematics and Statistics\\ University of Windsor\\ 401 Sunset Avenue\\Windsor, Ontario N9B 3P4\\ Canada}
\email{ishapiro@uwindsor.ca}
\author{Xiang Tang}
\address{Department of Mathematics\\ Washington University\\ St. Louis\\ MO 63130\\ USA}
\email{xtang@math.wustl.edu}
\author{Hsian-Hua Tseng}
\address{Department of Mathematics\\ Ohio State University\\ 100 Math Tower, 231 West 18th Ave.\\Columbus\\ OH 43210\\ USA}
\email{hhtseng@math.ohio-state.edu}
\begin{document}

\dedicatory{Dedicated to Professor Marc Rieffel on the occasion of his $75$th birthday}

\date{}
\maketitle

\begin{abstract}
We construct a new effective orbifold $\widehat{\Y}$ with an $S^1$-gerbe $c$ to study an $S^1$-gerbe $\mathfrak{t}$ on a $G$-gerbe $\Y$ over an orbifold $\B$. We view the former as the relative dual, relative to $\B$, of the latter.  We show that the two pairs $(\Y, \mathfrak{t})$ and $(\widehat{\Y}, c)$ have isomorphic categories of sheaves, and also the associated twisted groupoid algebras are Morita equivalent. As a corollary,  the K-theory and cohomology groups of $(\Y, \mathfrak{t})$ and $(\widehat{\Y}, c)$ are isomorphic.
\end{abstract}

\section{Introduction}
Gerbes are useful tools for studying
various problems in mathematics and physics. They can
be applied to the theory of non-abelian cohomology \cite{gi}, loop
spaces and characteristic classes \cite{bry}, the Dixmier-Douady
class, continuous trace $C^*$-algebras, index theory (\cite{ra-wi}
and \cite{ca-mi-mu}), the comparison between the Brauer group and
the cohomological Brauer group of a scheme \cite{de-Jong}, and the
period-index problem \cite{Lieblich2}. Furthermore, in physics,
gerbes are intimately connected with the study of discrete torsion.
See, e.g., \cite{vw} and \cite{sharpe}.

In this article, we study an $S^1$-gerbe over an orbifold $\Y$. An orbifold $\Y$ can be presented \cite{mo-pr} by a proper \'etale groupoid $\mathfrak{H}$ with $\Y$ being the quotient of $\mathfrak{H}_0$ by the equivalence relation defined by $\mathfrak{H}$. And an $S^1$-gerbe over $\Y$ can be represented \cite{be-xu} by an $S^1$ central extension of the groupoid $\mathfrak{H}$.  Generalizing the correspondence between $S^1$ central extensions of a group $H$ and the cohomology group $H^2(H, S^1)$, we can present \cite{be-xu} an $S^1$-gerbe over $\Y$ by an $S^1$-valued 2-cocycle $\mathfrak{t}$ on the groupoid $\mathfrak{H}$ presenting $\Y$, by passing to a Morita equivalent presentation. Inspired by the recent developments of duality of gerbes (\cite{hel-hen-pan-sh} and \cite{tt}), our goal in this paper is to construct a new effective orbifold $\widehat{\Y}$ together with an $S^1$-gerbe $c$ over $\widehat{\Y}$ out of $(\Y, \mathfrak{t})$ such that the following conjecture holds.

\begin{conj}The geometry/topology of $(\Y, \mathfrak{t})$ is equivalent to the geometry/topology of $(\widehat{\Y}, c)$.
\end{conj}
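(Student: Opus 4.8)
The plan is to make the Conjecture precise through the three assertions of the abstract and prove those directly: (i) an equivalence between the category of $\mathfrak{t}$-twisted sheaves on $\Y$ and the category of $c$-twisted sheaves on $\widehat{\Y}$; (ii) a Morita equivalence of the corresponding twisted groupoid convolution algebras; and (iii), as a formal consequence, isomorphisms of (twisted) K-theory and of cohomology. The whole thing is a single construction --- a globalized, twist-decorated Fourier transform along the $G$-direction of the gerbe $\Y \to \B$ --- examined from two angles.

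First I would fix presentations: a proper \'etale groupoid $\frakH \rightrightarrows \frakH_0$ for $\B$, an extension of groupoids $1 \to G \to \frakG \to \frakH \to 1$ presenting the $G$-gerbe $\Y$, and, after passing to a Morita equivalent refinement, a normalized $S^1$-valued groupoid $2$-cocycle on $\frakG$ (equivalently an $S^1$-central extension $\widetilde{\frakG}$) representing $\mathfrak{t}$. Every subsequent construction then has to be shown to be independent of these choices up to Morita equivalence.

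Next I would build $(\widehat{\Y}, c)$ and read off the equivalences. For $x \in \frakH_0$, the restriction of $\mathfrak{t}$ to the fiber $G_x$ gives the twisted group algebra $\com_{\mathfrak{t}_x}[G_x]$, which is finite-dimensional semisimple; I take its finite set of isomorphism classes of simple modules --- equivalently, irreducible $\mathfrak{t}_x$-projective representations of $G_x$ --- as the fiber over $x$ of an \'etale space $\widehat{G}^{\mathfrak{t}} \to \frakH_0$. The groupoid $\frakH$ acts on this space, a priori only projectively, and the resulting proper \'etale groupoid $\wfrakH$ presents the effective orbifold $\widehat{\Y}$, while the $S^1$-ambiguity in the residual action is exactly a $2$-cocycle representing $c$. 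The minimal central idempotents of the $\com_{\mathfrak{t}_x}[G_x]$, bundled over $x$, decompose each $\mathfrak{t}$-twisted $\frakG$-sheaf into its $G$-isotypic pieces; this decomposition is $\frakH$-equivariant up to the cocycle of $c$, and it sets up the equivalence in (i). The same bundle of minimal central projections is an imprimitivity bimodule between $C^*_r(\frakG, \mathfrak{t})$ and $C^*_r(\wfrakH, c)$, yielding (ii). Then (iii) follows formally: Morita equivalence gives the K-theory isomorphism, and the sheaf equivalence --- together with Morita invariance of periodic cyclic homology, or a direct comparison of the associated twisted orbifold cohomologies --- gives the cohomology isomorphism.

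The hard part is not the fiberwise statement, which is just classical Artin--Wedderburn / Peter--Weyl, but the global structure: showing that the fiberwise duals glue into a genuine \emph{effective} orbifold; that the residual $\frakH$-action on $\widehat{G}^{\mathfrak{t}}$ is captured by an honest $S^1$-central extension, so that $c$ is well defined (and note that $c$ need not be trivial even when $\mathfrak{t}$ is); that the continuous field $C^*_r(\frakG, \mathfrak{t})$ of semisimple algebras --- whose simple summands may jump in dimension and which need not be locally trivial --- nonetheless carries a globally defined imprimitivity bimodule; and that none of these outputs depends on the chosen presentation. I expect essentially all of the work to be concentrated there.
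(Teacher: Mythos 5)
Your plan coincides with the paper's own: the Conjecture is made precise exactly as your (i)--(iii); the dual space is built fiberwise from isomorphism classes of irreducible $\mathfrak{t}_x$-twisted unitary representations of the fiber group $G$; the residual action of the base groupoid on this space is projective, with the ambiguity in the choice of intertwiners $T^{[\rho]}_q$ recorded by the $2$-cocycle $c$; the sheaf equivalence is the $G$-isotypic decomposition (the functors $\cals=\mathcal{H}om^{G}(\widehat{\calv},-)$ and $\calt=\pi_*(-\otimes\widehat{\calv})$ of Theorem \ref{thm:sheaves-eq}); and the algebra statement is Theorem \ref{thm:gpd-morita}. One small correction: the imprimitivity bimodule is the bundle of irreducible representation spaces $\bigoplus_{[\rho]}V_\rho$ itself (the paper's $\Gamma(\widetilde{\calv})$), not the bundle of minimal central projections --- the latter only give the block decomposition of the twisted convolution algebra, while the former is the Morita bimodule between $\operatorname{End}(V_\rho)$ and $\complex$.

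The one genuinely missing idea is the local rigidity statement that makes every subsequent construction possible, namely Lemma \ref{lem:coboundary}. You correctly flag the danger that the field of semisimple algebras $\complex(G,\mathfrak{t}_x)$ might fail to be locally trivial, so that the fiberwise duals would not glue into an \'etale space over $M$ and the representatives $V_\rho$ would not exist as locally trivial vector bundles; but you leave this unresolved, and without it neither $\widehat{M}$ nor $\widehat{\calv}$ nor the bimodule is defined. The resolution is short but essential: near a point $x_0$ lift $\mathfrak{t}|_{G\times U}$ through $\exp$ to an $\reals$-valued cochain $\bar{\mathfrak{t}}$ with $\bar{\mathfrak{t}}_{x_0}=0$; its group coboundary is a continuous $\integers$-valued function vanishing at $x_0$, hence zero, so each $\bar{\mathfrak{t}}_x$ is an $\reals$-valued $2$-cocycle; since $H^2(G,\reals)=0$ for finite $G$ (by averaging), one produces a smooth $\phi:U\times G\to S^1$ trivializing the variation of $\mathfrak{t}_x$ along the $G$-fibers. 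Consequently the restriction of $\mathfrak{t}$ to $G\times M$ may be assumed locally constant, the simple summands do not jump on connected components, $\widehat{M}$ is a disjoint union of pieces $\widehat{G}^{\mathfrak{t}}\times U$, and the $T^{[\rho]}_q$ can be chosen locally constant so that $c$ is a smooth cocycle on $\widehat{M}\rtimes\mathfrak{Q}$. With that lemma supplied, your outline matches the paper's argument; note also (as the paper does in Remark \ref{rmk:flat}) that unlike the untwisted case the resulting $c$ is in general not flat, which is why the deformation-quantization consequences of \cite{tt} do not carry over directly.
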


Our construction of $(\widehat{\Y}, c)$ is inspired by \cite[Prop. 4.6]{bn}. Namely, there is a finite group $G$ and a reduced orbifold $\B$ such that $\Y$ is a $G$-gerbe over $\B$.  In \cite{tt}, inspired by \cite{hel-hen-pan-sh}, we constructed an orbifold $\widetilde{\Y}$ with a flat $S^1$-gerbe $\tilde{c}$. And we showed that many important geometric/topological properties of $\Y$ are equivalent to the ones of $(\widetilde{\Y}, \tilde{c})$. As $\mathfrak{t}$ defines an $S^1$-gerbe over $\Y$, our initial thought following the philosophy developed in \cite{tt} was that the dual associated to $\mathfrak{t}$ over $\Y$ should be some data over $(\widetilde{\Y}, \tilde{c})$. However, this idea does not work so well with the following example. Let $\Y$ be the orbifold that is the quotient of a point by a finite group $G$. An $S^1$-gerbe $\Y$ can be presented by an $S^1$-valued 2-cocycle $\mathfrak{t}$ over $G$.  The dual of $\Y$ constructed in \cite{tt} is $\widehat{G}$, the finite set of isomorphism classes of irreducible unitary $G$-representations with a trivial $\tilde{c}$. For the $S^1$-gerbe defined by $\mathfrak{t}$ over $\Y$, its category of sheaves is the category of $\mathfrak{t}$-twisted representations of $G$. However, we do not see a proper connection between this category and the space $\widehat{G}$.

Instead of working with $(\widetilde{\Y}, \tilde{c})$, we apply the key ideas in \cite{tt}.  In the case of the above example $(\Y=[pt/G], \mathfrak{t})$, we consider the set $\widehat{\Y}=\widehat{G}^\mathfrak{t}$ of isomorphism classes of irreducible $\mathfrak{t}$-twisted unitary $G$-representations. We equip $\widehat{G}^\mathfrak{t}$ with the trivial $S^1$-gerbe. The category of $\mathfrak{t}$-twisted $G$-representations is semisimple and therefore is isomorphic to the category of  sheaves over $\widehat{G}^\mathfrak{t}$. By passing to Morita equivalent ones, we present $\B$ by a proper \'etale groupoid $\mathfrak{Q}$ over $M$, and $\Y$ by a groupoid $\mathfrak{H}$ with the extension
\begin{equation}\label{eq:g-gerbe}
G\times M\longrightarrow \mathfrak{H}\longrightarrow \mathfrak{Q}.
\end{equation}
Associated to $(\Y,\mathfrak{t})$, we have a groupoid extension
\begin{equation}\label{eq:g-twisted}
G\times_\mathfrak{t}M\longrightarrow \mathfrak{H}_\mathfrak{t}\longrightarrow \mathfrak{Q},
\end{equation}
where $\mathfrak{H}_\mathfrak{t}$ is the central extension of $\mathfrak{H}$ over $M$ that presents the orbifold $\Y$, and $G\times_\mathfrak{t}M$ is the central extension of $G\times M$ by restriction of $\mathfrak{t}$ to $G\times M$. An important new property in (\ref{eq:g-twisted}) as compared to (\ref{eq:g-gerbe})  is that $\mathfrak{t}$ on $G\times M$ is not constant, and therefore $G\times_\mathfrak{t}M$ is not a bundle of the same group. Therefore, extension (\ref{eq:g-twisted}) is not a gerbe over $\B$ with isomorphic fiber groups any more. We observe a key property that the restriction of $\mathfrak{t}$ to $G\times M$ is locally constant. And it is this property that allows us to define a new orbifold $\widehat{\Y}$  together with a smooth $S^1$-gerbe $c$ generalizing the construction that was developed in \cite{tt}.

Our main result shows that $(\Y, \mathfrak{t})$ and $(\widehat{\Y}, c)$ are isomorphic as noncommutative geometric spaces. We verify this by proving the following two results.
\begin{enumerate}
\item (Theorem \ref{thm:sheaves-eq}) The category of (coherent) sheaves over $(\Y, \mathfrak{t})$ is isomorphic to the category of (coherent) sheaves over $(\widehat{\Y}, c)$.
\item (Theorem \ref{thm:gpd-morita}) The (twisted) groupoid algebra associated to $(\Y, \mathfrak{t})$ is Morita equivalent to the (twisted) groupoid algebra associated to $(\widehat{\Y}, c)$.
\end{enumerate}
As a corollary, the $K$-theory and cohomologies of $(\Y, \mathfrak{t})$ and $(\widehat{\Y}, c)$ are isomorphic. We point out that the new orbifold $\widehat{\Y}$ is effective. Therefore, our results suggest that it is sufficient to study the effective orbifold $\widehat{\Y}$ with the $S^1$-gerbe  $c$ to understand the geometry/topology of a general orbifold $\Y$ with an $S^1$-gerbe $\mathfrak{t}$.

 In general, we expect that all the results we have developed in \cite{tt} naturally extend to the pair $(\Y, \mathfrak{t})$ and $(\widehat{\Y}, c)$. However, the new property from \cite{tt} is that $c$ is in general not flat. Hence, the result we used in \cite{tt} about deformation quantizations of groupoid algebras \cite{ta} can not be applied directly here.  We plan to come back to study this issue using the ideas in \cite{bgnt}.

This article is organized as follows. In Sec. \ref{sec:group}, we explain our construction of the dual $(\widehat{\Y}, c)$ and the main theorems in the special case that $\Y$ is a $G$-gerbe over the orbifold $BQ$, where $Q$ is a finite group. And in Sec. \ref{sec:orbifold}, we develop the general construction for $(\widehat{\Y}, c)$ and prove the main results explained above. We work in the full generality of $S^1$-gerbes on a $G$-gerbe over an orbifold, namely, the groupoid extension (\ref{eq:g-twisted}), which covers the $S^1$-gerbe over an orbifold as a special example.  In the Appendix, we discuss the results of this article in the language of stacks. \\

\noindent{\bf Acknowledgements}: Shapiro's research is partially supported by an NSERC grant. Tang's research is partially supported by the NSF grant DMS 0900985 and NSA grant H98230-13-1-0209. Tseng's research is partially supported by a Simons Foundation collaboration grant.

\section{Central extension of a group extension}\label{sec:group}

\subsection{Structure of extensions}\label{subsec:structure}
Let $G$ and $Q $ be finite groups. Denote by $\Aut(G)$  the automorphism group of $G$, and let $\ad_g$  be the inner automorphism of $G$ given by the group element $g\in G$. Consider a map $\rho: Q\to \Aut(G)$. We assume that $\rho$ is almost a group morphism. More precisely, there is a map $\tau:Q\times Q\to G$ such that
\[
\rho(q_1)\rho(q_2)=\ad_{\tau(q_1,q_2)}\rho(q_1q_2),\qquad \tau(q_1, q_2)\tau(q_1q_2, q_3)=\rho_{q_1}(\tau(q_2, q_3))\tau(q_1, q_2q_3).
\]
As is explained in \cite[Section 3]{tt}, the above data $(\rho, \tau)$ determine an extension of $Q$ by $G$, i.e.,
\[
1\rightarrow G\rightarrow H\rightarrow Q\rightarrow 1,
\]
where $H$ is isomorphic to $G\times Q$ with the following product,
\[
(g_1, q_1)(g_2, q_2)=(g_1\rho(q_1)(g_2)\tau(q_1, q_2), q_1q_2).
\]

In this section, we consider a central extension of $H$ by the circle group $S^1$,
\[
1\rightarrow S^1\rightarrow H_\mathfrak{t}\rightarrow H\rightarrow 1.
\]
Such an extension group $H_\mathfrak{t}$ is determined by an $S^1$-valued 2-cocycle $\mathfrak{t}$ on $H$.  More precisely, $H_\mathfrak{t}$ is isomorphic to $H\times S^1$ whose multiplication is defined by
\[
(h_1, s_1)(h_2, s_2)=(h_1h_2, \mathfrak{t}(h_1, h_2)s_1s_2),
\]
and $\mathfrak{t}$ satisfies
\[
\mathfrak{t}(h_1, h_2)\mathfrak{t}(h_1h_2, h_3)=\mathfrak{t}(h_1, h_2h_3)\mathfrak{t}(h_2, h_3).
\]

The $S^1$-valued 2-cocycle $\mathfrak{t}$ restricts to define a central extension $G_\mathfrak{t}$ of $G$, i.e.,
\[
1\rightarrow S^1\rightarrow G_\mathfrak{t} \rightarrow G\rightarrow 1.
\]
The group $G_\mathfrak{t}$ is a normal subgroup of $H_\mathfrak{t}$ with $Q$ being the quotient, i.e.,
\[
1\rightarrow G_\mathfrak{t} \rightarrow H_\mathfrak{t}\rightarrow Q\rightarrow 1.
\]

For simplicity, we will assume that both $\tau$ and $\mathfrak{t}$ are normalized, i.e.,
$$
\tau(1, q)=\tau(q,1)=\tau(q,q^{-1})=1,$$  $$\mathfrak{t}\big((1,1), (g,q)\big)=\mathfrak{t}\big((g,q), (1,1)\big)=\mathfrak{t}\big((g,q), (\rho^{-1}_q(g^{-1}),q^{-1})\big)=1.$$

Recall that $H_\mathfrak{t}$ can be written as $S^1\times_\mathfrak{t} H$.  Furthermore, $H$ can be written as $G\times_{\rho, \tau} Q$, and so $H_\mathfrak{t}$ can be written as $S^1\times_\mathfrak{t} (G\times_{\rho, \tau} Q)$.  Choose a section $s:Q\longrightarrow S^1\times_\mathfrak{t} (G\times_{\rho, \tau} Q)$ by setting
\[
s(q)=(1,1,q).
\]
We compute that
\[
s(q_1)s(q_2)=(1,1,q_1)(1,1,q_2)=\Big(\mathfrak{t}\big((1, q_1), (1,q_2)\big), \tau(q_1, q_2), q_1q_2\Big).
\]
Therefore, $s(q_1)s(q_2)s(q_1q_2)^{-1}$ is equal to
\[
\Big(\mathfrak{t}\big((1,q_1), (1, q_2)\big)\mathfrak{t}\big((\tau(q_1,q_2), q_1q_2), (1, (q_1q_2)^{-1})\big), \tau(q_1,q_2),1\Big).
\]
Define $\sigma: Q\times Q\rightarrow S^1$ by
\[
\sigma(q_1, q_2):=\mathfrak{t}\big((1,q_1), (1, q_2)\big)\mathfrak{t}\big((\tau(q_1,q_2), q_1q_2), (1, (q_1q_2)^{-1})\big).
\]
And we have
\[
s(q_1)s(q_2)=\big(\sigma(q_1,q_2), \tau(q_1, q_2), 1\big) s(q_1q_2).
\]
Let
\[
\bar{\tau}(q_1, q_2)=\big(\sigma(q_1, q_2), \tau(q_1, q_2)\big)\in G_\mathfrak{t}.
\]

For $q\in Q$, and $(s,g)\in G_\mathfrak{t}$, we compute $\ad_{q}(s,g)$ to be
\begin{align*}
(1,1,q)(s,g,1)(1,1,q^{-1})&=\Big(s \mathfrak{t}\big((1,q), (g,1)\big), \rho_q(g), q\Big)(1,1,q^{-1})\\
&=\Big( s \mathfrak{t}\big((1,q), (g,1)\big)\mathfrak{t}\big((\rho_q(g),q), (1,q^{-1})\big), \rho_q(g),1\Big).
\end{align*}
Define $\nu: Q\times G\to S^1$ by
\[
\nu(q,g)=\mathfrak{t}\big((1,q), (g,1)\big)\mathfrak{t}\big((\rho_q(g),q), (1,q^{-1})\big).
\]
Then we have
\[
\ad_{q}(s,g)=(s\nu(q,g), \rho_q(g)).
\]
As $\ad_{q_1}\circ \ad_{q_2}=\ad_{(\sigma(q_1,q_2), \tau(q_1, q_2))} \ad_{q_1q_2}$, we have
\begin{align*}
\nu(q_1, \rho_{q_2}(g))\nu(q_2, g)&=\mathfrak{t}\big((\tau(q_1, q_2),1), (\rho_{q_1q_2}(g), 1)\big)\\
&=\mathfrak{t}\big((\tau(q_1,q_2)\rho_{q_1q_2}(g), 1),(\tau(q_1, q_2)^{-1},1) \big)\nu(q_1q_2, g).
\end{align*}

It is straightforward  to check the following properties for $\bar{\tau}:Q\times Q\to G_\mathfrak{t}$:
\begin{align*}
&(1)\qquad\bar{\tau}(1, q)=\bar{\tau}(q,1)=\bar{\tau}(q, q^{-1})=1\in G_\mathfrak{t},\\
&(2)\qquad s(q_1)s(q_2)=(\bar{\tau}(q_1, q_2), 1)s(q_1q_2)\in H_\mathfrak{t},\ \text{and therefore}\ \ad_{q_1}\ad_{q_2}=\ad_{\bar{\tau}(q_1, q_2)}\ad_{q_1q_2},\\
&(3)\qquad\bar{\tau}(q_1, q_2)\bar{\tau}(q_1q_2, q_3)=\ad_{q_1}\big(\bar{\tau}(q_2, q_3)\big)\bar{\tau}(q_1, q_2q_3)\in G_\mathfrak{t}.
\end{align*}


\subsection{Group algebra and twisted representations}\label{subsec:group-algebra}

With the discussion of the group structure of $H_\mathfrak{t}$, we now describe the twisted group algebra $\complex (H, \mathfrak{t})$.  Recall that the twisted group algebra $\complex(H,\mathfrak{t})$ is spanned by group elements of $H$ with the following multiplication
\[
\sum_{i}a_ih_i\sum_j b_jh_j =\sum_{i,j}\mathfrak{t}(h_i, h_j) a_ib_j h_ih_j,
\]
where $S^1$ is naturally embedded in $\mathbb{C}$ as the unit circle.

A $\mathfrak{t}$-twisted representation of $G$ refers to a representation of the group $G_\mathfrak{t}$ such that the subgroup $S^1$ acts with weight $1$, i.e. $\rho: G\to GL(V)$ such that
\[
\rho(g_1)\rho(g_2)=\rho(g_1g_2)\mathfrak{t}(g_1, g_2).
\]
Let $\widehat{G}^\mathfrak{t}$ be the set of isomorphism classes of irreducible unitary $\mathfrak{t}$-twisted representations of $G$. For $[\rho]\in \widehat{G}^\mathfrak{t}$, fix a representative $\rho: G_\mathfrak{t} \to \mathfrak{U}(V_\rho)$, where $\mathfrak{U}(V_\rho)$ is the unitary group of the Hilbert space $V_\rho$.  As $\complex (G, \mathfrak{t})$ is semisimple, we have the isomorphism
\begin{equation}
\label{eq:twisted-isom}
\complex (G, \mathfrak{t})\cong \bigoplus_{[\rho]\in \widehat{G}^\mathfrak{t}} \operatorname{End}(V_\rho).
\end{equation}
Fix a $q\in Q$, for $\rho: G_\mathfrak{t} \to \mathfrak{U}(V_\rho)$, the map $\rho\circ \ad_q: G_\mathfrak{t} \to \mathfrak{U}(V_\rho)$ defines a representation of $G_\mathfrak{t}$.  As $Q$ acts on $G$ modulo inner automorphisms, $[\rho]\mapsto [\rho\circ \ad_q]$ defines a natural $Q$-action on $\widehat{G}^\mathfrak{t}$.

Since the representation $\rho\circ \ad_q: G_\mathfrak{t}\to \mathfrak{U}(V_\rho)$ is isomorphic to the pre-chosen representation $q([\rho]): G_\mathfrak{t}\to \mathfrak{U}(V_{q([\rho])})$, there is an isomorphism of vector spaces
\[
T^{[\rho]}_q: V_\rho \to V_{q([\rho])},
\]
such that $\rho(\ad_q(t, g))={T^{[\rho]}_q}^{-1}\circ q([\rho])(t,g)\circ T^{[\rho]}_q$.  Similarly to the developments in \cite[Section 3]{tt}, there exists an $S^1$-valued 2-cocycle  $c$ on the transformation groupoid $\widehat{G}^\mathfrak{t}\rtimes Q\rightrightarrows \widehat{G}^\mathfrak{t}$ such that
\begin{equation}\label{eq:c-dfn}
T^{q_1([\rho])}_{q_2}\circ T^{[\rho]}_{q_1}=c^{[\rho]}(q_1, q_2)T^{[\rho]}_{q_1q_2}\rho(\tau(q_1, q_2))^{-1}\sigma(q_1, q_2)^{-1}.
\end{equation}

\begin{thm}\label{thm:gp-morita}
The twisted group algebra $\complex(H, \mathfrak{t})$ is Morita equivalent to the twisted groupoid algebra $C(\widehat{G}^\mathfrak{t}\rtimes Q, c)$.
\end{thm}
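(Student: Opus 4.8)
The plan is to exhibit $\complex(H,\mathfrak{t})$ as a twisted crossed product of $\complex(G,\mathfrak{t})$ by $Q$, to repackage the intertwiners $T^{[\rho]}_q$ as a $Q$-equivariant Morita equivalence between $\complex(G,\mathfrak{t})$ and the commutative algebra of functions on $\widehat{G}^\mathfrak{t}$, and then to let this equivalence descend to crossed products. First I would fix the section $s(q)=(1,1,q)$ and set $u_q:=s(q)$; since every element of $H_\mathfrak{t}$ is uniquely of the form $(t,g,1)u_q$ with $(t,g)\in G_\mathfrak{t}$, we get $\complex(H,\mathfrak{t})=\bigoplus_{q\in Q}\complex(G,\mathfrak{t})\,u_q$ as a vector space, and properties (2)--(3) of Section~\ref{subsec:structure} together with $\ad_q(t,g)=(t\nu(q,g),\rho_q(g))$ yield the product
\[
(a_1u_{q_1})(a_2u_{q_2})=a_1\,\ad_{q_1}(a_2)\,\bar{\tau}(q_1,q_2)\,u_{q_1q_2},\qquad a_1,a_2\in\complex(G,\mathfrak{t}),
\]
where $\ad_q$ is extended $\complex$-linearly to $\complex(G,\mathfrak{t})$ and $\bar{\tau}(q_1,q_2)\in G_\mathfrak{t}\subset\complex(G,\mathfrak{t})$. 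Thus $\complex(H,\mathfrak{t})$ is the twisted crossed product $\complex(G,\mathfrak{t})\rtimes_{(\ad_\bullet,\bar{\tau})}Q$.

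Next I would set $A:=\complex(G,\mathfrak{t})$ and use \eqref{eq:twisted-isom} to identify $A\cong\bigoplus_{[\rho]}\End(V_\rho)$, with $V:=\bigoplus_{[\rho]}V_\rho$ the standard imprimitivity bimodule between $A$ and the function algebra $B:=\complex^{\widehat{G}^\mathfrak{t}}$: here $A$ acts block-diagonally, $B$ acts by scalars on blocks, and $V\otimes_B\overline{V}\cong A$, $\overline{V}\otimes_A V\cong B$. Under \eqref{eq:twisted-isom} the automorphism $\ad_q$ permutes the blocks according to $[\rho]\mapsto q([\rho])$ and is implemented on $V$ by (a suitable normalization of) $U_q:=\bigoplus_{[\rho]}T^{[\rho]}_q$, in the sense that $U_q\,a\,U_q^{-1}=\ad_q(a)$ for all $a\in A$, while on $B$ it induces the $Q$-action on $\widehat{G}^\mathfrak{t}$. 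Rewriting \eqref{eq:c-dfn} with the operator identity $\rho(\bar{\tau}(q_1,q_2))=\sigma(q_1,q_2)\,\rho(\tau(q_1,q_2))$ shows that $U_\bullet$ is a projective $Q$-action on $V$ whose obstruction $2$-cocycle, once the $\bar{\tau}$-twist on the $A$-side is absorbed via $\rho(\bar{\tau})$, is exactly the $S^1$-valued cocycle $c$. In other words, $(V,U_\bullet)$ is a $Q$-equivariant Morita equivalence from $\bigl(A,(\ad_\bullet,\bar{\tau})\bigr)$ to $(B,c)$.

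Finally I would form $\E:=\bigoplus_{q\in Q}V\,u_q$, made into a left $\complex(H,\mathfrak{t})$-module by the formula of the first step and a right $C(\widehat{G}^\mathfrak{t}\rtimes Q,c)=B\rtimes_c Q$-module via $U_\bullet$ and the block-scalar $B$-action; the identities above are precisely what makes both actions well defined and mutually commuting. It then remains to check that $\E$ is an imprimitivity bimodule, namely $\E\otimes_{C(\widehat{G}^\mathfrak{t}\rtimes Q,c)}\overline{\E}\cong\complex(H,\mathfrak{t})$ and $\overline{\E}\otimes_{\complex(H,\mathfrak{t})}\E\cong C(\widehat{G}^\mathfrak{t}\rtimes Q,c)$ as bimodules; forgetting the $Q$-grading these reduce to the isomorphisms $V\otimes_B\overline{V}\cong A$ and $\overline{V}\otimes_A V\cong B$, which are $Q$-equivariant by construction. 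Since all the algebras involved are finite-dimensional and semisimple, it is equivalent, and perhaps quicker, to verify directly that $\E$ is a faithful left $\complex(H,\mathfrak{t})$-module, finitely generated projective on the right, with right endomorphism algebra $\complex(H,\mathfrak{t})$. I expect the main obstacle to be bookkeeping rather than ideas: one must choose the bimodule formulas so that associativity holds on the nose by virtue of \eqref{eq:c-dfn} and properties (1)--(3), while keeping the several cocycles $\mathfrak{t},\tau,\sigma,\nu,\bar{\tau},c$ and the weight-one $S^1$ convention consistent throughout; the conceptual content — that an equivariant Morita equivalence descends to crossed products — is standard.
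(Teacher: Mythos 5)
Your proposal is correct and follows essentially the same route as the paper: the paper likewise rewrites $\complex(H,\mathfrak{t})$ as the crossed product $\bigoplus_{[\rho]}\End(V_\rho)\rtimes_{T,\bar{\tau}}Q$ via \eqref{eq:twisted-isom} and then exhibits $\bigoplus_{[\rho]}V_\rho\otimes\complex Q$ (your $\E$) as the Morita equivalence bimodule, with the $T^{[\rho]}_q$ and \eqref{eq:c-dfn} producing the cocycle $c$ on the $B\rtimes_c Q$ side. You have simply filled in the bookkeeping that the paper leaves to the reader.
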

\begin{proof}
The proof is a straightforward generalization of the one of \cite[Theorem 3.1]{tt} and consists of two steps.
\begin{enumerate}
\item With the help of Equation (\ref{eq:twisted-isom}), write $\complex(H, \mathfrak{t})$ as a crossed product
\[
\complex(H, \mathfrak{t})\cong \bigoplus_{[\rho]\in \widehat{G}^\mathfrak{t}} \operatorname{End}(V_\rho)\rtimes_{T, \bar{\tau}} Q.
\]
\item Notice  that $\operatorname{End}(V_\rho)$ is Morita equivalent to $\complex$ with $V_\rho$ being the Morita equivalent bimodule.
Generalizing this to $ \bigoplus_{[\rho]\in \widehat{G}^\mathfrak{t}} \operatorname{End}(V_\rho)\rtimes_{T, \bar{\tau}} Q$, we show that $\oplus_{[\rho]\in \widehat{G}^\mathfrak{t}}V_\rho\otimes \complex Q$  is a Morita equivalence bimodule between $\bigoplus_{[\rho]\in \widehat{G}^\mathfrak{t}} \operatorname{End}(V_\rho)\rtimes_{T, \bar{\tau}} Q$ and $C(\widehat{G}^\mathfrak{t}\rtimes Q, c)$.
\end{enumerate}
We leave the details to the interested reader.
\end{proof}

We decompose $\widehat{G}^\mathfrak{t}$ into a disjoint union of $Q$-orbits $\coprod \calo_i$. The groupoid $\widehat{G}^\mathfrak{t}$ and the cocycle $c$ decomposes accordingly, i.e.,
\[
(\widehat{G}^\mathfrak{t}\rtimes Q, c)=\coprod (\calo_i\rtimes Q, c_i).
\]
For each $i$, choose $[\rho_i]\in \calo_i$, and let $Q_i$ be the isotropy group of the $Q$ action at $[\rho_i]$. Let $\mathfrak{c}_i$ be the restriction of $c_i$ to $Q_i$. Then \cite[Theorem 3.4]{tt} shows that the twisted groupoid algebra $C(\calo_i \rtimes Q, c_i)$ is Morita equivalent to the twisted group algebra $\complex(Q_i, \mathfrak{c}_i)$. And we conclude that
the twisted group algebra $\complex(H, \mathfrak{t})$ is Morita equivalent to
\[
\bigoplus_i \complex(Q_i, \mathfrak{c}_i).
\]
In summary, the above result suggests that the geometry/topology of the $S^1$-gerbe on $BH=[pt/H]$ defined by $\mathfrak{t}$ is isomorphic to the geometry/topology of the disjoint union of the $S^1$-gerbes on $BQ_i=[pt/Q_i]$ defined by $\mathfrak{c}_i$.

\subsection{Category of representations}\label{sec:gp-rep}
In this subsection, we discuss another proof of Theorem \ref{thm:gp-morita} emphasizing categories of representations. The category $\mathfrak{Rep}\big(\complex(H, \mathfrak{t})\big)$ of representations of $\complex(H, \mathfrak{t})$ is isomorphic to the category of $\mathfrak{t}$-twisted representations of $H$, which is the same as the category $\mathfrak{Rep}_\mathfrak{t}(H)$ of representations of $H_\mathfrak{t}$ that the component $S^1$ acts with weight 1. And the category $\mathfrak{Rep}\big(C(\widehat{G}^\mathfrak{t}\rtimes, Q, c)\big)$ of representations of $C(\widehat{G}^\mathfrak{t}\rtimes Q, c)$ is isomorphic to the category $\mathfrak{Rep}(\widehat{G}^\mathfrak{t}\rtimes Q, c)$ of $c$-twisted sheaves over the groupoid $\widehat{G}^\mathfrak{t}\rtimes Q$. Below, we will directly construct an isomorphism between $\mathfrak{Rep}_\mathfrak{t}(H)$ and $\mathfrak{Rep}(\widehat{G}^\mathfrak{t}\rtimes Q, c)$ and therefore give a proof of Theorem \ref{thm:gp-morita}.

We summarize the description of $H_\mathfrak{t}$ in Sec. \ref{subsec:structure}.  $H_\mathfrak{t}$ is an extension of $Q$ by $G_\mathfrak{t}$ with the following data.
\begin{enumerate}
\item A map $\ad: Q\to Aut(G_\mathfrak{t})$.
\item A map $\bar{\tau}: Q\times Q\to G_\mathfrak{t}$ satisfying
\[
\bar{\tau}(q_1, q_2)=\big(\sigma(q_1, q_2), \tau(q_1, q_2)\big).
\]
\item $\bar{\tau}$ and $\ad$ satisfy the following relations
\[
\ad_{q_1}\ad_{q_2}=\ad_{\bar{\tau}(q_1,q_2)}\ad_{q_1q_2},\qquad \bar{\tau}(q_1, q_2)\bar{\tau}(q_1q_2, q_3)=\ad_{q_1}(\bar{\tau}(q_2, q_3))\bar{\tau}(q_1, q_2q_3).
\]
\end{enumerate}

For every $[\rho]\in \widehat{G}^\mathfrak{t}$, fix a representative $\rho: G_\mathfrak{t}\to \mathfrak{U}(V_\rho)$ such that $S^1$ acts with weight $1$, where $\mathfrak{U}(V_\rho)$ is the group of unitary operators on $V_\rho$.  Let $(\pi, V)$ be a representation of $H_\mathfrak{t}$ such that $S^1$ acts with weight $1$.  Consider the natural decomposition
\[
V=\bigoplus_{[\rho]} \operatorname{Hom}^{G_\mathfrak{t}}(V_\rho, V)\otimes V_\rho,
\]
where $\operatorname{Hom}^{G_\mathfrak{t}}(V_\rho, V)$ is the space of $G$-equivariant linear maps from $V_\rho$ to $V$.

For $q\in Q$, recall that $T^{[\rho]}_q: V_\rho\to V_{q([\rho])}$ is the isomorphism between the representations $V_\rho$ and $V_{q([\rho])}$.  Define an isomorphism $T^\vee  _{q, [\rho]}$  from $\operatorname{Hom}^{G_\mathfrak{t}}(V_{q([\rho])}, V)$ to $\operatorname{Hom}^{G_\mathfrak{t}}(V_{\rho}, V)$ by
\begin{equation}\label{eq:dfn-tv}
T^\vee  _{q, [\rho]}(\phi):= \pi(s(q))\circ \phi \circ T^{[\rho]}_q.
\end{equation}
By a direct computation, we have the following property of $T^\vee_{q, [\rho]}$.
\begin{lem}\label{lem:twisted-rep}
\[
 T^\vee_{q_1, [\rho]}\circ T^\vee_{q_2, q_1([\rho])}=c(q_1,q_2) T^\vee_{q_1q_2, [\rho]}.
\]
\end{lem}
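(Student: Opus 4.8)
The plan is to verify Lemma \ref{lem:twisted-rep} by a direct computation, unwinding the definition \eqref{eq:dfn-tv} of $T^\vee$ twice and then invoking the cocycle relation \eqref{eq:c-dfn} together with the multiplicativity properties of the section $s$ and the representation $\pi$. Concretely, take $\phi \in \operatorname{Hom}^{G_\mathfrak{t}}(V_{q_1q_2([\rho])}, V)$ and compute
\[
T^\vee_{q_1, [\rho]}\big(T^\vee_{q_2, q_1([\rho])}(\phi)\big) = \pi(s(q_1))\circ\big(\pi(s(q_2))\circ \phi \circ T^{q_1([\rho])}_{q_2}\big)\circ T^{[\rho]}_{q_1}.
\]
I would first regroup this as $\pi(s(q_1))\pi(s(q_2))\circ \phi \circ \big(T^{q_1([\rho])}_{q_2}\circ T^{[\rho]}_{q_1}\big)$, and then substitute the two structural identities: on the left, $s(q_1)s(q_2) = (\bar\tau(q_1,q_2),1)\,s(q_1q_2)$ from property (2) in Section \ref{subsec:structure}, so that $\pi(s(q_1))\pi(s(q_2)) = \pi(\bar\tau(q_1,q_2))\,\pi(s(q_1q_2))$; on the right, \eqref{eq:c-dfn} gives $T^{q_1([\rho])}_{q_2}\circ T^{[\rho]}_{q_1} = c^{[\rho]}(q_1,q_2)\, T^{[\rho]}_{q_1q_2}\,\rho(\tau(q_1,q_2))^{-1}\sigma(q_1,q_2)^{-1}$.

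After these substitutions the expression becomes
\[
c^{[\rho]}(q_1,q_2)\,\sigma(q_1,q_2)^{-1}\;\pi(\bar\tau(q_1,q_2))\,\pi(s(q_1q_2))\circ \phi \circ T^{[\rho]}_{q_1q_2}\,\rho(\tau(q_1,q_2))^{-1}.
\]
The point is now to cancel the extra factors $\pi(\bar\tau(q_1,q_2))$ and $\rho(\tau(q_1,q_2))^{-1}$ against each other across $\phi$. Since $\phi$ is $G_\mathfrak{t}$-equivariant, $\phi\circ \rho(\tau(q_1,q_2))^{-1} = \pi(\tau(q_1,q_2))^{-1}\circ \phi$ — here I use that $\tau(q_1,q_2)\in G$, so $(\,\cdot\,,\tau(q_1,q_2))$ and the scalar $\sigma(q_1,q_2)$ together realize $\bar\tau(q_1,q_2)\in G_\mathfrak{t}$, and $\pi$ restricted to $G_\mathfrak{t}$ agrees with the action used to define equivariance. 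Thus $\pi(\bar\tau(q_1,q_2))\circ\pi(s(q_1q_2))\circ\phi\circ\rho(\tau(q_1,q_2))^{-1} = \pi(\bar\tau(q_1,q_2))\,\pi(\tau(q_1,q_2))^{-1}\circ \pi(s(q_1q_2))\circ\phi = \sigma(q_1,q_2)\,\pi(s(q_1q_2))\circ\phi$, where the residual scalar $\sigma(q_1,q_2)$ comes precisely from the discrepancy between $\bar\tau(q_1,q_2) = (\sigma(q_1,q_2),\tau(q_1,q_2))$ and $(1,\tau(q_1,q_2))$. This $\sigma(q_1,q_2)$ cancels the $\sigma(q_1,q_2)^{-1}$ above, leaving $c^{[\rho]}(q_1,q_2)\,\pi(s(q_1q_2))\circ\phi\circ T^{[\rho]}_{q_1q_2} = c(q_1,q_2)\,T^\vee_{q_1q_2,[\rho]}(\phi)$, as desired.

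The main obstacle — really the only subtlety — is bookkeeping the scalars: one must be careful about whether $\rho$, $\pi$, and the embedding $G_\mathfrak{t}\hookrightarrow H_\mathfrak{t}$ are normalized so that $S^1$ acts with weight $1$, since that is what lets the $\sigma$ factor emerge cleanly from $\bar\tau(q_1,q_2)$ versus $\tau(q_1,q_2)$, and whether the scalar $c^{[\rho]}(q_1,q_2)$ is independent of $[\rho]$ on each $Q$-orbit (so that it descends to the groupoid cocycle $c$ appearing in the statement). Both of these are guaranteed by the setup in Section \ref{subsec:structure} and the construction of $c$ via \eqref{eq:c-dfn}, following the pattern of \cite[Section 3]{tt}; once the conventions are pinned down the computation is forced. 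I would present it as the chain of equalities above, flagging the equivariance of $\phi$ as the one nontrivial input.
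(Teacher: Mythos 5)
Your overall strategy is the same as the paper's: expand the double application of \eqref{eq:dfn-tv}, substitute $s(q_1)s(q_2)=(\bar\tau(q_1,q_2),1)\,s(q_1q_2)$ on the left and \eqref{eq:c-dfn} on the right, extract the scalar $\sigma(q_1,q_2)$ from $\pi(\bar\tau(q_1,q_2))$ using that $S^1$ acts with weight $1$, and cancel the leftover $\pi(\tau(q_1,q_2))$ against $\rho(\tau(q_1,q_2))^{-1}$ by equivariance; the conclusion is correct. The one step that does not hold as literally written is the cancellation ``across $\phi$.'' In the expression $\pi(\bar\tau)\,\pi(s(q_1q_2))\circ\phi\circ T^{[\rho]}_{q_1q_2}\circ\rho(\tau(q_1,q_2))^{-1}$ the operator $\rho(\tau(q_1,q_2))^{-1}$ acts on $V_\rho$ and is separated from $\phi$ by $T^{[\rho]}_{q_1q_2}$, so the identity ``$\phi\circ\rho(\tau)^{-1}=\pi(\tau)^{-1}\circ\phi$'' is a domain mismatch ($\phi$ is equivariant for the representation $q_1q_2([\rho])$ on $V_{q_1q_2([\rho])}$, not for $\rho$ on $V_\rho$); and your subsequent move of $\pi(\tau(q_1,q_2))^{-1}$ to the left of $\pi(s(q_1q_2))$ is not a legitimate commutation, since $\tau(q_1,q_2)$ and $s(q_1q_2)$ need not commute in $H_\mathfrak{t}$. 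The clean version of exactly the cancellation you intend --- and what the paper does --- is to first recognize $\pi(s(q_1q_2))\circ\phi\circ T^{[\rho]}_{q_1q_2}=T^\vee_{q_1q_2,[\rho]}(\phi)$, which by construction lies in $\operatorname{Hom}^{G_\mathfrak{t}}(V_\rho,V)$, and then apply the $G_\mathfrak{t}$-equivariance of this \emph{whole composite} to trade $\rho(\tau(q_1,q_2))^{-1}$ on its right for $\pi(\tau(q_1,q_2))^{-1}$ on its left. (That equivariance follows from the equivariance of $\phi$, the intertwining property of $T^{[\rho]}_{q_1q_2}$, and $s(q)\bigl(s(q)^{-1}xs(q)\bigr)=xs(q)$; these are precisely your two commutations, which are individually unjustified but combine correctly for this reason.) With that one repair your argument coincides with the paper's proof.
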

\begin{proof}
Let $\phi\in \operatorname{Hom}^{G_\mathfrak{t}}(V_{q_2q_1([\rho])}, V)$. Compute $ T^\vee_{q_1, [\rho]}\circ T^\vee_{q_2, q_1([\rho])}(\phi)$ to be
\begin{align*}
\pi(s(q_1))\pi(s(q_2))\circ \phi \circ T_{q_2}^{q_1([\rho])} \circ T_{q_1}^{[\rho]}
&=\pi(s(q_1)s(q_2))\circ \phi \circ c^{[\rho]}(q_1, q_2)T^{[\rho]}_{q_1q_2}\circ \rho(\tau(q_1, q_2))^{-1}\sigma(q_1, q_2)^{-1}\\
&= c^{[\rho]}(q_1, q_2)\pi(\tau(q_1, q_2))\pi(s(q_1q_2)) \circ \phi \circ T^{[\rho]}_{q_1q_2} \circ \rho(\tau(q_1, q_2))^{-1}\\
&=c^{[\rho]}(q_1, q_2)\pi(\tau(q_1, q_2)) T^\vee_{q_1q_2, [\rho]}(\phi)\circ \rho(\tau(q_1, q_2))^{-1}\\
&=c^{[\rho]}(q_1, q_2)T^\vee_{q_1q_2, [\rho]}(\phi).
\end{align*}
\end{proof}

Define a sheaf $\widehat{V}$ on $\widehat{G}^{\mathfrak{t}}$ by
\[
\widehat{V}|_{[\rho]}:= \operatorname{Hom}^{G_\mathfrak{t}}(V_\rho, V).
\]
By Lemma \ref{lem:twisted-rep}, $\{T^\vee_{q, [\rho]}\}$ makes $\widehat{V}$ into a $c$-twisted sheaf over $\widehat{G}^\mathfrak{t}\rtimes Q$. This defines a functor
\[
\cals: \mathfrak{Rep}_\mathfrak{t}(H)\longrightarrow \mathfrak{Rep}(\widehat{G}^\mathfrak{t}\rtimes Q, c).
\]

In the other direction, given a $c$-twisted sheaf $W$ over $\widehat{G}^\mathfrak{t}\rtimes Q$, define a vector space
\[
\widetilde{W}=\bigoplus_{[\rho]\in \widehat{G}^\mathfrak{t}} W|_{[\rho]}\otimes V_\rho.
\]
For $h=(s, g, q)\in H_{\mathfrak{t}}$, define an action of $H_\mathfrak{t}$ on $\widetilde{W}$ by the following formula,
\[
\pi(h)=\sum_{[\rho]\in \widehat{G}^\mathfrak{t}} T^\vee_{q, [\rho]}\otimes \rho(s, g)\circ {T^{[\rho]}_q}^{-1}.
\]
\begin{lem}\label{lem:h-rep}
\[
\pi(h_1)\circ \pi(h_2)=\pi(h_1h_2).
\]
\end{lem}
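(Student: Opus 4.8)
The plan is to verify the identity $\pi(h_1)\circ\pi(h_2)=\pi(h_1h_2)$ by direct computation, expanding both sides over the orthogonal decomposition indexed by $[\rho]\in\widehat{G}^\mathfrak{t}$ and matching the two twisting mechanisms that appear: the $c$-twisted transition maps $T^\vee_{q,[\rho]}$ on the sheaf side, and the $\mathfrak{t}$-twisted representation operators $\rho(s,g)$ on the $G_\mathfrak{t}$ side. Write $h_i=(s_i,g_i,q_i)$, so that $h_1h_2=\big(s_1s_2\,\mathfrak{t}((g_1,q_1),(g_2,q_2)),\ g_1\rho(q_1)(g_2)\tau(q_1,q_2),\ q_1q_2\big)$ using the explicit multiplication of $H_\mathfrak{t}=S^1\times_\mathfrak{t}(G\times_{\rho,\tau}Q)$ recalled in Section \ref{subsec:structure}. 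The whole computation is ``block-diagonal'' in the following sense: the summand of $\pi(h_1)$ landing in $W|_{q_1([\rho])}\otimes V_{q_1([\rho])}$ must be post-composed with the summand of $\pi(h_2)$ — after relabeling — so only the terms whose $Q$-indices compose survive, and one is reduced to checking the identity on each $W|_{[\rho]}\otimes V_\rho$ separately.

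First I would compute the sheaf-theoretic factor. Composing $T^\vee_{q_2,q_1([\rho])}$ after $T^\vee_{q_1,[\rho]}$ — more precisely in the order dictated by the formula for $\pi$ — and invoking Lemma \ref{lem:twisted-rep}, one obtains $c^{[\rho]}(q_1,q_2)\,T^\vee_{q_1q_2,[\rho]}$, with a residual scalar/operator factor $c^{[\rho]}(q_1,q_2)$ that must be absorbed elsewhere. Next I would compute the representation-theoretic factor: the composite involves $\rho(s_2,g_2)\circ {T^{[\rho]}_{q_2}}^{-1}$ followed, after reindexing to the representative $q_1([\rho])$, by $\rho(s_1,g_1)\circ {T^{q_1([\rho])}_{q_1}}^{-1}$, and one conjugates using the defining relation $\rho(\ad_q(t,g))={T^{[\rho]}_q}^{-1}\circ q([\rho])(t,g)\circ T^{[\rho]}_q$ to move the $\rho$'s past the $T$'s. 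Using $T^{q_1([\rho])}_{q_2}\circ T^{[\rho]}_{q_1}=c^{[\rho]}(q_1,q_2)T^{[\rho]}_{q_1q_2}\rho(\tau(q_1,q_2))^{-1}\sigma(q_1,q_2)^{-1}$ (Equation \eqref{eq:c-dfn}) produces a $T^{[\rho]}_{q_1q_2}$, an inverse scalar $c^{[\rho]}(q_1,q_2)^{-1}$ which cancels the one from the sheaf side, and the extra factors $\rho(\tau(q_1,q_2))$ and $\sigma(q_1,q_2)$. These last factors are precisely what is needed: together with the product $s_1s_2$ and the action of $\rho(q_1)$ on $g_2$, they assemble — via the cocycle relations for $\sigma$, $\nu$, and $\bar\tau$ established in Section \ref{subsec:structure}, together with the $2$-cocycle identity for $\mathfrak{t}$ — into $\rho\big(s_1s_2\,\mathfrak{t}((g_1,q_1),(g_2,q_2)),\ g_1\rho(q_1)(g_2)\tau(q_1,q_2)\big)$, which is exactly the $\rho$-factor appearing in $\pi(h_1h_2)$.

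The main obstacle will be the bookkeeping in this second factor: one has to track the $S^1$-valued corrections carefully, since $\mathfrak{t}$ restricted to $G\times M$ is not constant and the relation between $\nu(q,g)$, $\sigma(q_1,q_2)$, and the values of $\mathfrak{t}$ on products of the form $(g,q)(g',q')$ is delicate. Concretely, the identity $\rho(s,g)\rho(s',g')=\rho\big(ss'\mathfrak{t}(g,g'),gg'\big)$ for the $\mathfrak{t}$-twisted representation must be combined with the way $\ad_q$ twists $G_\mathfrak{t}$ (the factor $\nu(q,g)$ from $\ad_q(s,g)=(s\nu(q,g),\rho_q(g))$) and with the $\sigma$-term hidden inside $\bar\tau(q_1,q_2)$; verifying that all these scalars multiply to the single scalar $\mathfrak{t}((g_1,q_1),(g_2,q_2))$ is exactly the content of the compatibility relations displayed at the end of Section \ref{subsec:structure}, so the proof amounts to feeding those relations into the expansion in the right order. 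I would organize the computation as a chain of equalities in a single \texttt{align*} block, moving the $\pi(s(q))$ factors to the left, the $T$'s to the middle, and the $\rho$'s to the right, collecting scalars at each step, exactly as in the proof of Lemma \ref{lem:twisted-rep}; since the paper elsewhere says ``we leave the details to the interested reader'' for the parallel Morita statement, it would be consistent to carry out the key cancellations explicitly and then remark that the remaining scalar identity is the one recorded in Section \ref{subsec:structure}.
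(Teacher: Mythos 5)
Your proposal is correct and follows essentially the same route as the paper's proof: block-diagonal matching of the summands over $[\rho]$, Lemma \ref{lem:twisted-rep} for the $T^\vee$ factors, insertion of $T^{[\rho]}_{q_1}\circ{T^{[\rho]}_{q_1}}^{-1}$ together with Equation \eqref{eq:c-dfn} to produce $\rho(\ad_{q_1}(s_2,g_2))$ and cancel the $c^{[\rho]}(q_1,q_2)$ factors, and finally assembling the remaining operators into $\rho$ of the $G_\mathfrak{t}$-component of $h_1h_2$. The only cosmetic difference is that the paper leaves the answer in the form $\rho\big((s_1,g_1)\ad_{q_1}(s_2,g_2)\bar\tau(q_1,q_2)\big)$ using the $(\ad,\bar\tau)$ presentation of $H_\mathfrak{t}$, whereas you unwind it into the $(s,g,q)$ coordinates via $\mathfrak{t}$; both identify the same element.
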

\begin{proof}
We compute $\pi(h_1)\pi(h_2)$ by
\begin{eqnarray*}
&&\sum_{[\rho_1]} T^\vee_{q_1, [\rho]}\otimes \rho_1(s_1, g_1){T^{[\rho]}_{q_1}}^{-1}\sum_{[\rho_2]} T^\vee_{q_2, [\rho_2]}\otimes \rho_2(s_2, g_2){T^{[\rho_2]}_{q_2}}^{-1}\\
&=&\sum_{[\rho_1]}T^\vee_{q_1, [\rho]}T^\vee_{q_2, q_1([\rho])}\otimes  \rho(s_1, g_1){T^{[\rho]}_{q_1}}^{-1}\circ q_1([\rho])(s_2, g_2) {T^{q_1([\rho])}_{q_2}}^{-1}\\
&=&\sum_{[\rho]}c^{[\rho]}(q_1, q_2)T^\vee_{q_1q_2, [\rho]}\otimes \rho(s_1, g_1){T^{[\rho]}_{q_1}}^{-1}\circ q_1([\rho])(s_2, g_2){T^{[\rho]}_{q_1}}\circ {T^{[\rho]}_{q_1}}^{-1} \circ {T^{q_1([\rho])}_{q_2}}^{-1}\\
&=&\sum_{[\rho]}c^{[\rho]}(q_1, q_2)T^\vee_{q_1q_2, [\rho]}\otimes \rho(s_1, g_1)\rho(\ad_{q_1}(s_2, g_2))\circ{\sigma(q_1, q_2) \rho(\tau(q_1, q_2))T^{[\rho]}_{q_1q_2}}^{-1}c^{[\rho]}(q_1, q_2)^{-1}\\
&=&\sum_{[\rho]}T^\vee_{q_1q_2, [\rho]}\otimes \rho\Big( (s_1, g_1)\ad_{q_1}(s_2, g_2)\big(\sigma(q_1, q_2), \tau(q_1, q_2)\big)\Big){T^{[\rho]}_{q_1q_2}}^{-1}\\
&=&\pi(h_1h_2).
\end{eqnarray*}
\end{proof}
 Lemma \ref{lem:h-rep} shows that $(\widetilde{W}, \pi)$ is a representation of $H_\mathfrak{t}$ such that $S^1$ acts with weight $1$. Therefore, we have defined a functor
 \[
 \calt: \mathfrak{Rep}(\widehat{G}^\mathfrak{t}\rtimes Q, c)\longrightarrow \mathfrak{Rep}_\mathfrak{t}(H).
 \]
 It is straightforward to check that
 \[
 \calt\circ \cals =id\qquad\text{and}\qquad \cals \circ \calt=id.
 \]
Hence, we conclude with the following theorem.
\begin{thm}\label{prop:category-gp} The categories $\mathfrak{Rep}(\widehat{G}^\mathfrak{t}\rtimes Q, c)$ and $\mathfrak{Rep}_\mathfrak{t}(H)$ are equivalent.
\end{thm}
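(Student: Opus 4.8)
The plan is to prove the theorem by exhibiting the two functors $\cals$ and $\calt$ already constructed as mutually quasi-inverse. Lemmas \ref{lem:twisted-rep} and \ref{lem:h-rep} show that $\cals$ and $\calt$ are well defined on objects, and on morphisms both are immediate: an $H_\mathfrak{t}$-intertwiner $f\colon V\to V'$ induces the maps $\phi\mapsto f\circ\phi$ on each multiplicity space $\operatorname{Hom}^{G_\mathfrak{t}}(V_\rho,V)$, which commute with all the operators $T^\vee_{q,[\rho]}$ by \eqref{eq:dfn-tv}, and a morphism of $c$-twisted sheaves $g\colon W\to W'$ induces $\bigoplus_{[\rho]} g|_{[\rho]}\otimes\mathrm{id}_{V_\rho}$, which is $H_\mathfrak{t}$-equivariant by inspection of the formula for $\pi$ on $\widetilde W$. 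So the remaining task is to produce natural isomorphisms $\calt\circ\cals\cong\mathrm{id}$ and $\cals\circ\calt\cong\mathrm{id}$.

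For $\calt\circ\cals$: given $(\pi,V)\in\mathfrak{Rep}_\mathfrak{t}(H)$ one has $\calt(\cals(V))=\bigoplus_{[\rho]}\operatorname{Hom}^{G_\mathfrak{t}}(V_\rho,V)\otimes V_\rho$, and the natural candidate is the evaluation map $\mathrm{ev}\colon\phi\otimes v\mapsto\phi(v)$. Since $\complex(G,\mathfrak{t})$ is semisimple, \eqref{eq:twisted-isom} shows that $\mathrm{ev}$ is a $G_\mathfrak{t}$-equivariant isomorphism onto $V$, being exactly the canonical isotypic decomposition of $\pi|_{G_\mathfrak{t}}$. What needs checking is that $\mathrm{ev}$ intertwines the source action $\sum_{[\rho]}T^\vee_{q,[\rho]}\otimes\rho(s,g)\circ(T^{[\rho]}_q)^{-1}$ with $\pi(s,g,q)$ on the target. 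Unwinding the definition \eqref{eq:dfn-tv} of $T^\vee_{q,[\rho]}$ and the defining relation $\rho(\ad_q(t,g))=(T^{[\rho]}_q)^{-1}\circ q([\rho])(t,g)\circ T^{[\rho]}_q$, this reduces to the commutation rule $\pi(s(q))\,\pi(t,g)=\pi\big(\ad_q(t,g)\big)\,\pi(s(q))$ inside $H_\mathfrak{t}$, which holds by the very definitions of $s(q)$ and of $\ad_q$.

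For $\cals\circ\calt$: given a $c$-twisted sheaf $W$ over $\widehat{G}^\mathfrak{t}\rtimes Q$, the restriction to $G_\mathfrak{t}$ of the $H_\mathfrak{t}$-action on $\calt(W)=\bigoplus_{[\rho']}W|_{[\rho']}\otimes V_{\rho'}$ acts only on the $V_{\rho'}$ factors (since $s(1)$ is the identity and $T^{[\rho]}_1=\mathrm{id}$), so Schur's lemma — the $V_{\rho'}$ being irreducible and pairwise non-isomorphic $\mathfrak{t}$-twisted representations of $G$ — gives a canonical identification $\cals(\calt(W))|_{[\rho]}=\operatorname{Hom}^{G_\mathfrak{t}}(V_\rho,\calt(W))\cong W|_{[\rho]}$, $w\mapsto(v\mapsto w\otimes v)$. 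I would then verify that under this identification the twisting operators $T^\vee_{q,[\rho]}$ attached to $\cals(\calt(W))$ agree with the original structure maps of $W$; by \eqref{eq:dfn-tv} this is once more a bookkeeping computation with the $H_\mathfrak{t}$-action on $\calt(W)$ and the cocycle identity \eqref{eq:c-dfn}, parallel to the proof of Lemma \ref{lem:h-rep}.

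The only delicate point in either direction is exactly this cocycle bookkeeping: one must track the scalars $c^{[\rho]}(q_1,q_2)$ and $\sigma(q_1,q_2)$ together with the operators $T^{[\rho]}_q$ and $\rho(\tau(q_1,q_2))$ so that they cancel precisely as they do in Lemmas \ref{lem:twisted-rep} and \ref{lem:h-rep}; everything else — semisimplicity of $\complex(G,\mathfrak{t})$, Schur's lemma, and naturality in the argument — is formal. Once the two natural isomorphisms are in place, $\cals$ and $\calt$ realize the asserted equivalence of categories.
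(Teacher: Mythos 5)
Your proposal is correct and follows essentially the same route as the paper: the paper constructs the same two functors $\cals$ and $\calt$ via the isotypic decomposition and the operators $T^\vee_{q,[\rho]}$, proves Lemmas \ref{lem:twisted-rep} and \ref{lem:h-rep}, and then simply asserts that $\calt\circ\cals=id$ and $\cals\circ\calt=id$, leaving the verification to the reader. Your evaluation-map and Schur's-lemma arguments supply precisely the details the paper omits, and they check out.
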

\section{$S^1$-gerbe on an orbifold}\label{sec:orbifold}
In this section, we generalize the discussion in Sec. \ref{sec:group} to study a central extension of a general orbifold.

\subsection{$S^1$-gerbe over an orbifold} Let $\mathfrak{Q}\rightrightarrows M$ be a proper \'etale Lie groupoid representing an orbifold $\B$. We consider a $G$-gerbe $\Y$ over $\B$, which by \cite{la-st-xu} can be presented by an extension of $\mathfrak{Q}$ by a bundle of groups that are isomorphic to $G$, i.e.,
\[
G\times M\longrightarrow \mathfrak{H}\longrightarrow \mathfrak{Q}.
\]

We are interested in an $S^1$-gerbe $\Y_\mathfrak{t}$ over an orbifold $\Y$.  More precisely, $\Y_\mathfrak{t}$ is presented as an $S^1$-central extension of the groupoid $\mathfrak{H}$, i.e.,
\[
 S^1\times M\longrightarrow \mathfrak{H}_\mathfrak{t}\longrightarrow \mathfrak{H}.
\]
By the assumption that $\mathfrak{H}_\mathfrak{t}$ and its nerve spaces are disjoint unions of contractible open charts, the groupoid $\mathfrak{H}_\mathfrak{t}$ can be written as $S^1\times \mathfrak{H}$ with the product defined by an $S^1$-valued 2-cocycle $\mathfrak{t}$ on $\mathfrak{H}$.

Consider the restriction of $\mathfrak{t}$ to the subgroupoid $G\times M$. This defines an $S^1$ central extension
\[
S^1\times M\longrightarrow G\times_\mathfrak{t}M \longrightarrow G\times M.
\]
It is not hard to see that the groupoid $G\times_\mathfrak{t}M\rightrightarrows M$ is a normal subgroupoid of $\mathfrak{H}_\mathfrak{t}$ with the quotient being $\mathfrak{Q}$, i.e.,
\begin{equation}\label{eq:ext-gpd}
G\times_\mathfrak{t}M\longrightarrow \mathfrak{H}_\mathfrak{t}\longrightarrow \mathfrak{Q}.
\end{equation}
\begin{lem}\label{lem:coboundary} For each $x_0\in M$, there is a neighborhood $U$ of $x_0$, and a smooth function $\phi:U\times G\to  S^1$ such that for every pair $g, h\in G$
\[
\phi(x, g)^{-1}\phi(x,h)^{-1}\phi(x, gh) \mathfrak{t}_x(g, h)
\]
is constant over $U$.
\end{lem}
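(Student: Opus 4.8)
The plan is to exploit the fact that $\mathfrak{H}_\mathfrak{t}$ and all of its nerve spaces are disjoint unions of contractible open charts, so that the relevant cocycle condition can be trivialized on a small chart. First I would fix $x_0\in M$ and, shrinking if necessary, choose a contractible chart $U\ni x_0$ over which the groupoid $G\times M$ restricts to $G\times U$ (this is possible since $G$ is finite and $\mathfrak{Q}\rightrightarrows M$ is \'etale). The restriction of the $2$-cocycle $\mathfrak{t}$ to $G\times U$ is a smooth map $\mathfrak{t}:(G\times U)\times_U(G\times U)\to S^1$, which for each fixed $x\in U$ is a genuine $S^1$-valued group $2$-cocycle $\mathfrak{t}_x$ on $G$ satisfying the normalization $\mathfrak{t}_x(1,g)=\mathfrak{t}_x(g,1)=1$.

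The key point is that the group cohomology $H^2(G,S^1)$ of the finite group $G$ is finite (it is the Schur multiplier, a finite abelian group), hence discrete, while $x\mapsto [\mathfrak{t}_x]\in H^2(G,S^1)$ is ``continuous'' in the appropriate sense; since $U$ is connected, $[\mathfrak{t}_x]$ is independent of $x\in U$. More usefully, the trivialization can be made to vary smoothly: over the contractible chart $U$ one can produce a smooth $1$-cochain realizing $\mathfrak{t}_x$ as a fixed cocycle plus a coboundary. Concretely, I would argue as follows. Because $\mathfrak{t}$ is smooth and $G$ is finite, and because $U$ is contractible, there is a smooth map $\psi:U\times G\times G\to\reals$ lifting the ``fractional part'' of $\mathfrak{t}$ near $x_0$, i.e.\ writing $S^1=\reals/\integers$ we get a smooth $\reals$-valued $2$-cochain $\psi(x,\cdot,\cdot)$ on $G$ with $e^{2\pi i\psi(x,g,h)}=\mathfrak{t}_x(g,h)\mathfrak{t}_{x_0}(g,h)^{-1}$ and $\psi(x_0,\cdot,\cdot)=0$; for each $x$ this $\psi(x,\cdot,\cdot)$ is a real-valued normalized $2$-cocycle on $G$. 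Since $H^2(G,\reals)=0$ (finite group, $\reals$ a uniquely divisible coefficient module), the standard averaging formula for the contracting homotopy gives a canonical primitive: setting
\[
\lambda(x,g)\;:=\;-\frac{1}{|G|}\sum_{k\in G}\psi(x,g,k),
\]
one checks by the cocycle identity for $\psi$ that $\psi(x,g,h)=\lambda(x,g)+\lambda(x,h)-\lambda(x,gh)$ for all $g,h\in G$, and $\lambda$ is smooth in $x$ because $\psi$ is. Then $\phi(x,g):=e^{2\pi i\lambda(x,g)}$ is a smooth function $U\times G\to S^1$ with
\[
\phi(x,g)^{-1}\phi(x,h)^{-1}\phi(x,gh)\,\mathfrak{t}_x(g,h)\;=\;\mathfrak{t}_{x_0}(g,h),
\]
which is manifestly constant over $U$. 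After possibly adjusting $\phi$ to be normalized ($\phi(x,1)=1$, using $\psi(x,1,1)=0$), this is exactly the assertion.

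I expect the only genuine subtlety to be the \emph{smoothness in $x$} of the trivializing cochain $\phi$: one must make sure that $\mathfrak{t}_x$ really does lift to a smooth $\reals$-valued cochain near $x_0$ (rather than just continuously), which follows because $\mathfrak{t}$ is smooth on the nerve and $G\times G$ is a finite discrete set, so each matrix entry $\mathfrak{t}_x(g,h)$ is a smooth $S^1$-valued function of $x$ with $\mathfrak{t}_{x_0}(g,h)^{-1}\mathfrak{t}_{x_0}(g,h)=1$, hence admits a smooth logarithm on a small enough $U$. Everything else — the cocycle identity for $\psi$, the verification that the averaged $\lambda$ is a primitive, and the normalization — is the standard finite-group vanishing-of-$H^2$-with-$\reals$-coefficients computation, which I would carry out in one or two lines. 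This is the lemma that encodes the ``$\mathfrak{t}|_{G\times M}$ is locally constant in cohomology'' observation highlighted in the introduction, and it is what licenses the subsequent construction of $\widehat{\Y}$.
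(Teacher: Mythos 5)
Your proposal is correct and follows essentially the same route as the paper's proof: lift $\mathfrak{t}_x$ (after dividing by the constant cocycle $\mathfrak{t}_{x_0}$) to a smooth $\reals$-valued $2$-cochain via the local inverse of $\exp$, observe that this lift is in fact a $2$-cocycle for every $x$, kill it with the explicit averaging formula witnessing $H^2(G,\reals)=0$, and exponentiate. Two details are worth tightening. First, the step you assert in passing --- that $\psi(x,\cdot,\cdot)$ is a genuine $2$-cocycle for every $x\in U$ --- is the one place where something beyond a formal computation is needed: a priori $d\psi_x$ is only $\integers$-valued (its exponential is $d(\mathfrak{t}_x)\,d(\mathfrak{t}_{x_0})^{-1}=1$), and one must use that it vanishes at $x_0$ and depends continuously on $x$ over the connected chart $U$ to conclude it vanishes identically; the paper spells this argument out. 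Second, your averaging formula has a sign slip: summing the cocycle identity $\psi(x,g,h)+\psi(x,gh,k)=\psi(x,g,hk)+\psi(x,h,k)$ over $k\in G$ gives $\psi(x,g,h)=\lambda(x,g)+\lambda(x,h)-\lambda(x,gh)$ for $\lambda(x,g)=+\frac{1}{|G|}\sum_{k\in G}\psi(x,g,k)$, so your $\lambda$ as written is a primitive of $-\psi$ and the final expression would come out as $\mathfrak{t}_{x_0}(g,h)^{-1}\mathfrak{t}_x(g,h)^{2}$ rather than the constant $\mathfrak{t}_{x_0}(g,h)$. Neither point changes the approach, which matches the paper's.
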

\begin{proof}
Without loss of generality, we assume that $\mathfrak{t}_{x_0}$ is a constant function with value 1 on $G\times G$.  Consider the exponential map $\exp: \reals\to S^1$, which is a local diffeomorphism. Using this property and the fact that $G$ is finite, we can choose a small neighborhood $U$ of $x_0$, such that over $U$ there is a  smooth function $\bar{\mathfrak{t}}: U\times G\times G\to \reals$, such that $\exp(2\pi \sqrt{-1}\bar{\mathfrak{t}})$ is equal to $\mathfrak{t}$ and $\bar{\mathfrak{t}}_{x_0}=0$.

Let $d$ be the differential of the group cochain complex of $G$. As $\mathfrak{t}$ is a cocycle and
\[
\exp(2\pi \sqrt{-1}d(\bar{\mathfrak{t}}_x))=d (\mathfrak{t}_x)=1,
\]
the coboundary $d(\bar{\mathfrak{t}}_x)$ must be $\integers$ valued.  At $x_0$, since $\bar{\mathfrak{t}}_{x_0}$ is assumed to be the zero function on $G\times G$, $d(\bar{\mathfrak{t}}_{x_0})$ is equal to the zero function on $G\times G\times G$. Hence, we conclude that $d(\bar{\mathfrak{t}}_x)$ is a zero function on $G\times G\times G$ for every $x$ as $\bar{t}$ is assumed to be a smooth function.  Therefore we conclude that $\bar{\mathfrak{t}}_x$ is a 2-cocycle on $G$.

Because $G$ is finite, $H^2(G, \reals)$ is zero. More precisely, for every 2-cocycle $F$ on $G$,  the function
\[
f(g):= \frac{1}{|G|}\sum_{x\in G}F(x, g)
\]
satisfies $df=F$. This construction defines a smooth function $\Phi:U\times G\to \reals$ such that $d(\Phi_x)=\bar{\mathfrak{t}}_x$.  Define $\phi_x=\exp(2\pi \sqrt{-1}\Phi_x): U\times G\to S^1$. Then we can directly check that
\[
d\phi_x=\mathfrak{t}_x,
\]
which satisfies the desired property.
\end{proof}

By Lemma \ref{lem:coboundary}, we choose an open cover of $M$ such that every open set is contained in a neighborhood introduced in that lemma.  We replace the groupoid extension (\ref{eq:ext-gpd}) by a Morita equivalent one via pulling back the involved groupoids to the open cover above. By \cite{mo-pr-ind}, we can  furthermore assume that, $\mathfrak{Q}$ and all its nerve spaces are disjoint unions of contractible open sets, by pulling back all the groupoids in (\ref{eq:ext-gpd}) to a finer cover of $M$. Also $\mathfrak{H}$ and its nerve spaces are disjoint unions of contractible open charts. Hence we can choose a smooth section $\alpha: \mathfrak{Q}\to \mathfrak{H}_\mathfrak{t}$.  Using $\alpha$, we can rewrite $\mathfrak{H}_\mathfrak{t}$ as in Sec. \ref{sec:gp-rep}. More explicitly, we have the following data.
\begin{enumerate}
\item A smooth map $\ad: \mathfrak{Q}\to \aut(G\times_\mathfrak{t}M)$, i.e., for $q\in \mathfrak{Q}$, $\ad_q$ is an isomorphism from $G\times_{\mathfrak{t}}M|_{t(q)}$ to $G\times_{\mathfrak{t}}M|_{s(q)}$ defined by
\[
\ad_q(t,g)=\alpha(q)(t,g)\alpha(q)^{-1}.
\]
\item A smooth map $\bar{\tau}$ from $\mathfrak{Q}^{(2)}:=\{(q_1, q_2)\in \mathfrak{Q}\times \mathfrak{Q}|t(q_1)=s(q_2)\}$ to $G\times_{\mathfrak{t}}M$ defined by
\[
\bar{\tau}(q_1, q_2)\in G\times_{\mathfrak{t}}M|_{s(q_1)},
\]
such that
\[
\alpha(q_1)\alpha(q_2)=\bar{\tau}(q_1, q_2)\alpha(q_1q_2).
\]
\end{enumerate}

By Lemma \ref{lem:coboundary}, there is a smooth function $\phi: M\times G\to S^1$ such that the following map
\[
\Phi: (s, g, x)\mapsto (s\phi(x, g), g, x)
\]
maps $G\times_{\mathfrak{t}}M$ isomorphically onto a new groupoid such that the new cocycle $\mathfrak{t}'$ is locally constant.  Therefore, we can restrict ourselves to the consideration of an extension of the form (\ref{eq:ext-gpd}) such that the cocycle $\mathfrak{t}$ on $G\times_{\mathfrak{t}}M$ is locally constant.

\begin{rmk}In the above discussion, we have used the smooth function $\phi$ in Lemma \ref{lem:coboundary}. Such a function, in general, is not unique. However, different choices of $\phi$ lead to isomorphic groupoid extensions.
\end{rmk}

\subsection{Dual of an $S^1$-gerbe} \label{subsec:gpd-dual}In this subsection we construct a proper \'etale groupoid with an $S^1$-gerbe, associated to the groupoid extension (\ref{eq:ext-gpd}).
On every connected component $U$ of $M$, $\mathfrak{t}|_{G\times U}$ is constant.  Recall that $\widehat{G}^\mathfrak{t}$ is the set of isomorphism classes of irreducible $\mathfrak{t}$-twisted unitary representations of $G$. Consider $\widehat{G}^\mathfrak{t}\times U$, and let $\widehat{M}$ be the disjoint union of $\widehat{G}^\mathfrak{t}\times U$; denote by $\lambda$ the natural map $\lambda: \widehat{M}\to M$.

For every $q\in \mathfrak{Q}$,  $\ad_q$ defines an isomorphism from $G\times_{\mathfrak{t}}M|_{t(q)}$ to $G\times_\mathfrak{t}M|_{s(q)}$. As is explained in Sec. \ref{subsec:group-algebra}, $\ad_q$ defines an action map from $\widehat{G}^{\mathfrak{t}(s(q))}$ to $\widehat{G}^{\mathfrak{t}(t(q))}$, and therefore defines a $\mathfrak{Q}$ action on $\widehat{M}$. We consider the transformation groupoid $\widehat{M}\rtimes \mathfrak{Q}$.

For every point $([\rho], x)\in \widehat{M}$, we note that  $[\rho]$ is an isomorphism class of an irreducible $\mathfrak{t}|_{G\times \{x\}}$-twisted unitary representation of $G$. For every $[\rho]$, choose a representative $(V_{\rho}, \rho:G \to \mathfrak{U}(V_\rho))$, an irreducible $\mathfrak{t}|_{G\times \{x\}}$-twisted unitary representation of $G$. As the cocycle $\mathfrak{t}$ on $G\times_\mathfrak{t}M$ is locally constant, the representative $(V_\rho, \rho)$ can be chosen to be a locally trivial vector bundle on $\widehat{M}$. Similarly to the construction in Sec. \ref{subsec:group-algebra}, for every $([\rho], q)\in \widehat{M}\rtimes \mathfrak{Q}$, there is an isomorphism $T^{[\rho]}_q: V_{\rho}|_{s(q)}\to V_{q([\rho])}|_{t(q)}$, such that
\[
\rho(\ad_q(t, g))={T^{[\rho]}_q}^{-1}\circ q([\rho])(t, g)\circ T^{[\rho]}_q.
\]
As $G$ is finite, $T^{[\rho]}_{q}$ can be chosen to be a locally constant map. It is straightforward to check that
\[
c^{[\rho]}(q_1, q_2)
:=T^{q_1([\rho])}_{q_2}\circ T^{[\rho]}_{q_1}\rho(\bar{\tau}(q_1, q_2)){T^{[\rho]}_{q_1q_2}}^{-1}
\]
is a linear map on $V_{q_1q_2([\rho])}$ commuting with the $G$-action. Schur's Lemma implies that $c^{[\rho]}(q_1, q_2)$ must be a scalar of norm 1. By exactly the same arguments as \cite[Prop. 4.5]{tt}, we conclude that $c$ is a smooth 2-cocycle on $\widehat{M}\rtimes \mathfrak{Q}$.  Let us denote by $\widehat{\Y}$ the orbifold associated to the $\widehat{M}\rtimes \mathfrak{Q}$. On $\widehat{\Y}$, there is also an $S^1$-gerbe $\widehat{\Y}_c$ defined by the $2$-cocycle $c$.

\begin{rmk}\label{rmk:flat}
Differently from what was obtained in \cite[Prop. 4.6]{tt}, the above cocycle $c$ on $\widehat{M}\rtimes \mathfrak{Q}$ is usually not locally constant. For example, when $G$ is trivial, $\widehat{M}\rtimes \mathfrak{Q}$ is $\mathfrak{Q}$, and $c=\mathfrak{t}$. Nevertheless, when $\mathfrak{t}$ is locally constant, then one can check that $c$ is locally constant.
\end{rmk}

\begin{ex}
Consider an extension of  a proper \'etale groupoid $\mathfrak{Q}$, i.e.,
\[
1\rightarrow G\times M\rightarrow \mathfrak{H}\rightarrow \mathfrak{Q}\rightarrow 1.
\]
Let $t_0$ be an $S^1$-valued 2-cocycle on $\mathfrak{Q}$, and $\mathfrak{t}$ be the pullback of $t_0$ to $\mathfrak{H}$. The dual groupoid of $\mathfrak{H}_\mathfrak{t}$, introduced in this subsection,  is $\widehat{G}\rtimes \mathfrak{Q}$. In \cite{tt}, an $S^1$-valued 2-cocycle $c_0$ on the transformation groupoid $\widehat{G}\rtimes \mathfrak{Q}$ was introduced. There is a canonical groupoid morphism from $\widehat{G}\rtimes \mathfrak{Q}$ to $\mathfrak{Q}$. The pullback of $t_0$  defines an $S^1$-valued $2$-cocycle $\bar{t}$ on $\widehat{G}\rtimes \mathfrak{Q}$. The cocycle $c$ on $\widehat{G}\rtimes \mathfrak{Q}$ that is dual to $\mathfrak{H}_\mathfrak{t}$, as discussed in this subsection, is $c_0\bar{t}$.  It is easy to check that $t$ is locally constant if and only if $c$ is locally constant.
\end{ex}

\subsection{Categories of sheaves}

In this subsection, we discuss the connection between the orbifolds $(\Y, \mathfrak{t})$ and $(\widehat{\Y}, c)$.  We prove that as noncommutative algebraic spaces the two spaces $(\Y, \mathfrak{t})$ and $(\widehat{\Y}, c)$ are isomorphic.

Over $\widehat{M}$, define a sheaf $\widehat{\calv}$, a vector bundle on each component, as follows. For each $([\rho], x)\in \widehat{M}$, $\widehat{\calv}|_{([\rho], x)}$ is the representation space $V_{\rho}$ chosen in Sec. \ref{subsec:gpd-dual} for the group $G_{\mathfrak{t}_x}$. As $\mathfrak{t}|_{G\times_{\mathfrak{t}}M}$ is assumed to be locally constant, $\widehat{\calv}$ is a smooth vector bundle on each component of $\widehat{M}$.
By the natural homomorphism from $\mathfrak{H}_\mathfrak{t}$ to $\mathfrak{Q}$, we have that $\mathfrak{H}$ acts on $\widehat{M}$. And there is a natural groupoid morphism from $\widehat{M}\rtimes \mathfrak{H}_\mathfrak{t}$ to $\widehat{M}\rtimes \mathfrak{Q}$. Pulling back the cocycle $c$ on $\widehat{M}\rtimes \mathfrak{Q}$ defines an $S^1$-valued 2-cocycle on $\widehat{M}\rtimes \mathfrak{H}_\mathfrak{t}$, which is again denoted by $c$.

We equip $\widehat{\calv}$ with a left $c^{-1}$-twisted $\widehat{M}\rtimes \mathfrak{H}_\mathfrak{t}$ action $\alpha_{\widehat{M}\rtimes \mathfrak{H}_\mathfrak{t}}$ as follows. For $h\in \mathfrak{H}_\mathfrak{t}$, by the section $\alpha: \mathfrak{Q}\to \mathfrak{H}_\mathfrak{t}$, we can write $h=g\alpha(q)$ where $q$ is the image of $h$ under the canonical map from $\mathfrak{H}_\mathfrak{t}$ to $\mathfrak{Q}$, and $g\in G\times_\mathfrak{t}M$. Let $(q([\rho]), x)\in \widehat{G}\times_{\mathfrak{t}}M$ with $x=t(h)$ and $\xi\in \widehat{\calv}_{q([\rho])}$. The computation of Lemma \ref{lem:h-rep} shows that
\[
\alpha_{\widehat{M}\rtimes \mathfrak{H}_\mathfrak{t}}(\xi):= \rho(g)\circ {T^{[\rho]}_q}^{-1}(\xi)\in \widehat{\calv}_{\rho}
\]
defines a natural $c^{-1}$-twisted action of $\widehat{M}\rtimes \mathfrak{H}_\mathfrak{t}$ on $\widehat{\calv}$.   Therefore, we have defined a $c^{-1}$-twisted sheaf $\widehat{\calv}$ on $\widehat{M}\rtimes \mathfrak{H}_\mathfrak{t}$, where the subgroupoid $S^1\times M$ acts with weight $1$.

Let $\mathfrak{Rep}_\mathfrak{t}(\mathfrak{H})$ be the category of (coherent) $\mathfrak{H}_\mathfrak{t}$-sheaves of vector spaces over $\complex$ such that the subgroupoid $S^1\times M$ acts with weight $1$. Let $\mathfrak{Rep}(\widehat{M}\rtimes \mathfrak{Q}, c)$ be the category of $c$-twisted (coherent)  $\widehat{M}\rtimes \mathfrak{Q}$-sheaves of vector spaces over $\complex$.  The following theorem is a generalization of Theorem \ref{prop:category-gp}.
\begin{thm}\label{thm:sheaves-eq}The two categories $\mathfrak{Rep}_\mathfrak{t}(\mathfrak{H})$ and $\mathfrak{Rep}(\widehat{M}\rtimes \mathfrak{Q}, c)$ are isomorphic.
\end{thm}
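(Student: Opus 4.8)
The plan is to imitate the proof of Theorem~\ref{prop:category-gp} by constructing two explicit functors between $\mathfrak{Rep}_\mathfrak{t}(\mathfrak{H})$ and $\mathfrak{Rep}(\widehat{M}\rtimes\mathfrak{Q},c)$ and showing they are mutually inverse, the fundamental object being the $c^{-1}$-twisted sheaf $\widehat{\calv}$ on $\widehat{M}\rtimes\mathfrak{H}_\mathfrak{t}$ constructed just before the statement. The idea is that the pointwise arguments of Section~\ref{sec:gp-rep} apply verbatim on each fiber over $M$, so the content of the theorem is the assertion that these fiberwise constructions assemble into genuine (coherent, smooth) objects; this is where the reduction carried out in Section~\ref{subsec:gpd-dual} via Lemma~\ref{lem:coboundary}, to the case in which $\mathfrak{t}|_{G\times_\mathfrak{t}M}$ is locally constant and $\mathfrak{Q}$, $\mathfrak{H}$ have contractible nerve charts, does the work.

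For the functor $\cals:\mathfrak{Rep}_\mathfrak{t}(\mathfrak{H})\to\mathfrak{Rep}(\widehat{M}\rtimes\mathfrak{Q},c)$: given an $\mathfrak{H}_\mathfrak{t}$-sheaf $\calw$ on which $S^1\times M$ acts with weight $1$, set
\[
\cals(\calw):=\underline{\operatorname{Hom}}^{G\times_\mathfrak{t}M}(\widehat{\calv},\calw),
\]
a sheaf over $\widehat{M}$ whose fiber at $([\rho],x)$ is $\operatorname{Hom}^{G_{\mathfrak{t}_x}}(V_\rho,\calw_x)$. The residual action of $\mathfrak{H}_\mathfrak{t}$ — which, after using the section $\alpha:\mathfrak{Q}\to\mathfrak{H}_\mathfrak{t}$ together with $G\times_\mathfrak{t}M$-equivariance, factors through $\mathfrak{Q}$ — endows $\cals(\calw)$ with a $c$-twisted $\widehat{M}\rtimes\mathfrak{Q}$-structure; this is the family version of formula (\ref{eq:dfn-tv}), and the relevant cocycle identity is precisely Lemma~\ref{lem:twisted-rep}, the $c^{-1}$-twisting of $\widehat{\calv}$ cancelling against the $c$-twisting produced by composing the maps $T^{[\rho]}_q$. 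In the reverse direction, for a $c$-twisted $\widehat{M}\rtimes\mathfrak{Q}$-sheaf $\calu$ put
\[
\calt(\calu):=\widehat{\calv}\otimes_{\sO_{\widehat{M}}}\calu,\qquad \calt(\calu)_x=\bigoplus_{[\rho]}\calu|_{([\rho],x)}\otimes V_\rho,
\]
with the $\mathfrak{H}_\mathfrak{t}$-action given fiberwise by the formula of Lemma~\ref{lem:h-rep} (built from $\alpha$, the $\rho$'s, the $T^{[\rho]}_q$'s and the $c$-structure of $\calu$); Lemma~\ref{lem:h-rep}, applied on each fiber, shows $\calt(\calu)$ is a bona fide weight-$1$ $\mathfrak{H}_\mathfrak{t}$-sheaf. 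Both $\cals$ and $\calt$ are manifestly functorial in morphisms of twisted sheaves, and preserve coherence because $\widehat{\calv}$ has finite rank on each component.

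It remains to produce natural isomorphisms $\calt\circ\cals\cong\mathrm{id}$ and $\cals\circ\calt\cong\mathrm{id}$, and these come from fiberwise semisimplicity: for each $x\in M$ the algebra $\complex(G,\mathfrak{t}_x)$ is semisimple, so $\calw_x\cong\bigoplus_{[\rho]}\operatorname{Hom}^{G_{\mathfrak{t}_x}}(V_\rho,\calw_x)\otimes V_\rho$ canonically, which is exactly $\calt\cals(\calw)\cong\calw$; dually $\operatorname{Hom}^{G_{\mathfrak{t}_x}}\big(V_\rho,\,\bigoplus_{[\sigma]}\calu|_{([\sigma],x)}\otimes V_\sigma\big)\cong\calu|_{([\rho],x)}$ by Schur's lemma, giving $\cals\calt(\calu)\cong\calu$; and compatibility of these fiberwise isomorphisms with the respective twisted groupoid actions is verified by the same computations as in Lemmas~\ref{lem:twisted-rep} and~\ref{lem:h-rep}. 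I expect the one genuinely new difficulty, and the step deserving most care, to be the \emph{global/coherence} bookkeeping: that $\widehat{\calv}$ and the intertwiners $T^{[\rho]}_q$ can be chosen locally constant, that $\ad_q$ descends to a well-defined $\mathfrak{Q}$-action on isomorphism classes so that $\widehat{M}\rtimes\mathfrak{Q}$ is a proper \'etale groupoid, and that the pointwise Wedderburn and Schur isomorphisms glue over the connected components of $\widehat{M}$. All of this is guaranteed once one works, as in Section~\ref{subsec:gpd-dual}, on a cover on which $\mathfrak{t}|_{G\times_\mathfrak{t}M}$ is locally constant and the nerve charts are contractible; after that reduction the argument of Section~\ref{sec:gp-rep} runs componentwise without change.
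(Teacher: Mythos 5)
Your proposal matches the paper's proof essentially verbatim: the paper defines $\cals(\calw)=\mathcal{H}om^{G\times_{\mathfrak{t}}\widehat{M}}(\widehat{\calv},\lambda^*\calw)$ and $\calt(\widetilde{\calw})=\pi_*(\widetilde{\calw}\otimes\widehat{\calv})$, equips them with the twisted groupoid actions via formula (\ref{eq:dfn-tv}) and Lemmas \ref{lem:twisted-rep} and \ref{lem:h-rep}, and reduces the verification that they are mutually inverse to the local computation of Section \ref{sec:gp-rep} — exactly your two functors and your fiberwise semisimplicity/Schur argument. Your write-up is in fact somewhat more explicit than the paper's about the Wedderburn decomposition and the gluing/coherence issues, which the paper leaves to the reader.
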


\begin{proof}
Let $\calw$ be a (coherent) $\mathfrak{H}_\mathfrak{t}$-sheaf such that the subgroupoid $S^1\times M$ acts with weight $1$. Pull back $\calw$ to $\widehat{M}$ via the canonical map $\lambda: \widehat{M}\to M$. Then $\lambda^*(\calw)$ is a $\widehat{M}\rtimes \mathfrak{H}_\mathfrak{t}$-sheaf such that the subgroupoid $S^1\times M$ acts with weight 1.

Denote by $G\times \widehat{M}\rightrightarrows \widehat{M}$ the groupoid of the trivial bundle of the group $G$ over $\widehat{M}$. Let $G\times_\mathfrak{t}\widehat{M}\rightrightarrows \widehat{M}$ be the groupoid defined by pulling back the cocycle $\mathfrak{t}$ on $G\times M\rightrightarrows M $ along the natural groupoid morphism $\pi:G\times \widehat{M}\to G\times M$. It is not difficult to see that $G\times_\mathfrak{t}\widehat{M}$ is a normal subgroupoid of $\widehat{M}\rtimes \mathfrak{H}_\mathfrak{t}$ and the cocycle $c^{-1}$ restricts to a trivial one on  $G\times_\mathfrak{t}\widehat{M}$. Hence $\widehat{\calv}$ is a $G\times_\mathfrak{t}\widehat{M}$-sheaf.  Define a sheaf $\widehat{\calw}$ on $\widehat{M}$ by
\[
\widehat{\calw}:= \mathcal{H}om^{G\times _{\mathfrak{t}}\widehat{M}}(\widehat{\calv}, \lambda^*\calw)
\]

The same formula (\ref{eq:dfn-tv}) for $T^\vee_{q, [\rho]}$ in Sec. \ref{sec:gp-rep} and the computation in Lemma \ref{lem:twisted-rep}, equip $\widehat{\calw}$ with a $c$-twisted $\widehat{M}\rtimes \mathfrak{Q}$-sheaf structure.

Define a functor $\cals: \mathfrak{Rep}_\mathfrak{t}(\mathfrak{H})\to \mathfrak{Rep}(\widehat{M}\rtimes \mathfrak{Q}, c)$ by
\[
\cals(\calw):=\widehat{\calw}.
\]

We next define a functor $\calt:\mathfrak{Rep}(\widehat{M}\rtimes \mathfrak{Q}, c)\to \mathfrak{Rep}_\mathfrak{t}(\mathfrak{H})$ as follows. Let $\widetilde{\calw} $ be a $c$-twisted $\widehat{M}\rtimes\mathfrak{Q}$-sheaf, which can be viewed as a $c$-twisted $\widehat{M}\rtimes \mathfrak{H}_\mathfrak{t}$-sheaf by the canonical map from $\mathfrak{H}_\mathfrak{t}$ to $\mathfrak{Q}$. As $\widehat{\calv}$ is a $c^{-1}$-twisted $\widehat{M}\rtimes {\mathfrak{H}}_\mathfrak{t}$-sheaf, the tensor product $\widetilde{\calw}\otimes \widehat{\calv}$ is a $\widehat{M}\rtimes \mathfrak{H}_\mathfrak{t}$-sheaf without any twist. Hence $\pi_*(\widetilde{\calw}\otimes \widehat{\calv})$ is an $\mathfrak{H}_\mathfrak{t}$-sheaf such that the subgroupoid $S^1\times M$  acts with weight $1$.

Define a functor $\calt: \mathfrak{Rep}(\widehat{M}\rtimes \mathfrak{Q}, c)\to \mathfrak{Rep}_\mathfrak{t}(\mathfrak{H})$ by
\[
\calt(\widetilde{\calw})=\pi_*(\widetilde{\calw}\otimes \widehat{\calv}).
\]

With the definition of $\cals$ and $\calt$, the proof that they are inverse to each other reduces to the local computation that is explained Sec. \ref{sec:gp-rep}. We omit the details.
\end{proof}

Considering the Grothendieck groups of the categories, we obtain the following corollary.
\begin{cor}
$K_0(\Y, \mathfrak{t})\cong K_0(\widehat{\Y}, c)$.
\end{cor}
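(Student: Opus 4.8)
\textbf{Proof proposal for the Corollary $K_0(\Y, \mathfrak{t})\cong K_0(\widehat{\Y}, c)$.}

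The plan is to deduce the corollary directly from Theorem \ref{thm:sheaves-eq}. By definition, $K_0(\Y, \mathfrak{t})$ is the Grothendieck group $K_0\big(\mathfrak{Rep}_\mathfrak{t}(\mathfrak{H})\big)$ of the abelian category of coherent $\mathfrak{H}_\mathfrak{t}$-sheaves on which $S^1\times M$ acts with weight $1$, and similarly $K_0(\widehat{\Y}, c)=K_0\big(\mathfrak{Rep}(\widehat{M}\rtimes\mathfrak{Q}, c)\big)$. Theorem \ref{thm:sheaves-eq} provides an isomorphism (not merely an equivalence) of these two categories, implemented by the mutually inverse functors $\cals$ and $\calt$. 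First I would observe that $\cals$ and $\calt$ are additive and exact: both are built out of pullback $\lambda^*$, pushforward $\pi_*$ (which is exact here because $\pi:G\times\widehat{M}\to G\times M$ is a finite covering, so $\pi_*$ is just a finite direct sum over the fibers), the internal-Hom functor $\mathcal{H}om^{G\times_\mathfrak{t}\widehat{M}}(\widehat{\calv},-)$, and tensoring with the locally free sheaf $\widehat{\calv}$; since $\widehat{\calv}$ is a vector bundle on each component, $-\otimes\widehat{\calv}$ is exact, and taking $G$-invariant Homs out of the finite-dimensional representation $\widehat{\calv}$ is exact because $\complex[G\times_\mathfrak{t}\widehat M]$-modules of the relevant type are semisimple (the order of $G$ is invertible). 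Hence a short exact sequence in one category goes to a short exact sequence in the other.

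Next I would invoke the universal property of $K_0$: an exact functor between abelian categories induces a homomorphism on Grothendieck groups, functorially in the composition of exact functors, and the identity functor induces the identity on $K_0$. Applying this to $\cals$ and $\calt$ gives homomorphisms $\cals_*:K_0(\Y,\mathfrak{t})\to K_0(\widehat{\Y},c)$ and $\calt_*:K_0(\widehat{\Y},c)\to K_0(\Y,\mathfrak{t})$ with $\cals_*\circ\calt_*=(\cals\circ\calt)_*=(\mathrm{id})_*=\mathrm{id}$ and $\calt_*\circ\cals_*=\mathrm{id}$, so $\cals_*$ is the desired isomorphism of abelian groups. If one prefers the topological $K$-theory of the twisted groupoid algebras rather than the $K_0$ of the sheaf category, one can instead feed Theorem \ref{thm:gpd-morita} (the Morita equivalence of twisted groupoid algebras) into the Morita invariance of operator-algebraic $K$-theory; I would state this as the alternative route and note the two agree.

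The only point that requires a word of care — and the place I expect a referee to press — is exactness of $\calt$, specifically that $\pi_*$ and the internal Hom are exact in this setting rather than merely left/right exact. This is where finiteness of $G$ does the work: $\pi$ is a finite étale map so $\pi_*$ is exact, and semisimplicity of the (twisted) group algebra of the finite group $G$ over $\complex$ makes $\mathcal{H}om^{G\times_\mathfrak{t}\widehat M}(\widehat\calv,-)$ exact, just as in the local computation of Section \ref{sec:gp-rep}. Once exactness is in hand the rest is the formal $K_0$ machinery, so this is really the entire obstacle. I would close by remarking that the same argument applies verbatim with $K_0$ replaced by higher algebraic $K$-theory or by the cohomology groups referred to in the introduction, since all one used was that $\cals,\calt$ are mutually inverse exact functors.
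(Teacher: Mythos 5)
Your proposal is correct and follows the same route as the paper, which deduces the corollary from Theorem \ref{thm:sheaves-eq} simply by passing to Grothendieck groups of the two (isomorphic) categories of coherent sheaves. The extra care you take with exactness of $\cals$ and $\calt$ (via finiteness of $G$ and semisimplicity of the twisted group algebra) is a reasonable elaboration of a step the paper leaves implicit, but it does not change the argument.
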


\subsection{Twisted groupoid algebra}In this subsection, we prove that as noncommutative differential geometric spaces $(\Y, \mathfrak{t})$ and $(\widehat{\Y}, c)$ are isomorphic. For $\mathfrak{H}_\mathfrak{t}$, we consider the space $C^\infty_c(\mathfrak{H})$ of compactly supported smooth functions on $\mathfrak{H}$ with the following $\mathfrak{t}$-twisted convolution product. For $\phi, \varphi \in C^\infty_c(\mathfrak{H})$,
\[
\phi\circ_{\mathfrak{t}}\varphi(h):= \sum_{t(h)=t(h')}\mathfrak{t}(hh'^{-1}, h')\phi(hh'^{-1})\varphi(h'),
\]
where $S^1$ is naturally embedded in $\mathbb{C}$ as the unit circle.  Similarly, for $(\widehat{M}\rtimes \mathfrak{Q}, c)$, we consider the space of $C^\infty_c(\widehat{M}\rtimes \mathfrak{Q})$ of compactly supported smooth functions on $\widehat{M}\rtimes \mathfrak{Q}$ with a similar $c$-twisted convolution product defined in the same way. Both $(C^\infty_c(\mathfrak{H}), \circ_\mathfrak{t})$ and $(C^\infty_c(\widehat{M}\rtimes \mathfrak{Q}), \circ_c)$ are Fr\'echet algebras, and therefore bornological algebras \cite{me}, \cite{pptt}. The following theorem is a generalization of \cite[Thm. 4.8]{tt}.

\begin{thm}
\label{thm:gpd-morita} The two twisted groupoid algebras $(C^\infty_c(\mathfrak{H}), \circ_\mathfrak{t})$ and $(C^\infty_c(\widehat{M}\rtimes \mathfrak{Q}), \circ_c)$ are Morita equivalent as bornological algebras.
\end{thm}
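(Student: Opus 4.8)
The plan is to construct an explicit Morita equivalence bimodule between $(C^\infty_c(\mathfrak{H}), \circ_\mathfrak{t})$ and $(C^\infty_c(\widehat{M}\rtimes \mathfrak{Q}), \circ_c)$, following the strategy of \cite[Thm. 4.8]{tt} but with the twist data bookkeeping handled by the $S^1$-valued cocycles $\mathfrak{t}$ and $c$. The first step is to globalize the algebraic picture of Section \ref{subsec:group-algebra}: using the locally constant cocycle $\mathfrak{t}|_{G\times_\mathfrak{t}M}$ and the semisimplicity of each $\complex(G, \mathfrak{t}_x)$, write $C^\infty_c(G\times_\mathfrak{t}M)$ fiberwise as $\bigoplus_{[\rho]\in\widehat{M}}\operatorname{End}(V_\rho)$, with $\widehat{\calv}=\bigoplus V_\rho$ the bundle chosen in Section \ref{subsec:gpd-dual}. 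This exhibits $C^\infty_c(\mathfrak{H}_\mathfrak{t})$ as a twisted crossed product of $C^\infty_c(G\times_\mathfrak{t}M)$ by $\mathfrak{Q}$ via the data $(\ad_q, \bar{\tau})$ and of $C^\infty_c(\widehat{M}\rtimes\mathfrak{Q})$ as the corresponding $c$-twisted crossed product of $C^\infty_c(\widehat{M})=\bigoplus\complex$ by $\mathfrak{Q}$.

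Next I would define the bimodule. Set $\calv:=\Gamma_c(\widehat{\calv}\otimes \lambda^*\Omega^{1/2})$ (half-densities along the fibers of $\lambda:\widehat{M}\to M$, or one may just work with the naive sections and carry the measures explicitly), which carries a left $C^\infty_c(\mathfrak{H}_\mathfrak{t})$-action by the formula $\alpha_{\widehat{M}\rtimes\mathfrak{H}_\mathfrak{t}}$ for the $c^{-1}$-twisted $\widehat{M}\rtimes\mathfrak{H}_\mathfrak{t}$-action on $\widehat{\calv}$ established just before Theorem \ref{thm:sheaves-eq} (this is literally the computation in Lemma \ref{lem:h-rep}), and a right $C^\infty_c(\widehat{M}\rtimes\mathfrak{Q})$-action by $c$-twisted convolution on the $\widehat{M}$-factor. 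The two actions commute because the left action is built from $\rho(g)\circ(T^{[\rho]}_q)^{-1}$ and the right action shifts the base point in $\widehat{M}$; the $c$ and $c^{-1}$ twists cancel exactly as in the last two lines of the proof of Lemma \ref{lem:h-rep}. I would then write down the two inner products: the $C^\infty_c(\mathfrak{H}_\mathfrak{t})$-valued one, $\langle\xi,\eta\rangle_{\mathfrak{H}_\mathfrak{t}}(h)$, obtained by pairing $\xi$ against $\eta$ over the fiber $\lambda^{-1}$ and inserting the $\mathfrak{t}$-twist, and the $C^\infty_c(\widehat{M}\rtimes\mathfrak{Q})$-valued one, $\langle\xi,\eta\rangle_{\widehat{M}\rtimes\mathfrak{Q}}$, obtained from the fiberwise Hermitian structure on $\widehat{\calv}$ together with the $c$-twist; these are the density/measure-decorated versions of the pairings implicit in $\widehat{\calw}=\mathcal{H}om^{G\times_\mathfrak{t}\widehat{M}}(\widehat{\calv},\lambda^*\calw)$ and $\pi_*(\widetilde{\calw}\otimes\widehat{\calv})$ from Theorem \ref{thm:sheaves-eq}.

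The remaining work is the verification package: (i) each inner product is compatible with the relevant module action and satisfies the associativity identity $\langle\xi,\eta\rangle_{\mathfrak{H}_\mathfrak{t}}\cdot\zeta=\xi\cdot\langle\eta,\zeta\rangle_{\widehat{M}\rtimes\mathfrak{Q}}$ — this reduces, after unwinding the crossed-product descriptions, to the cocycle identity defining $c$ in \eqref{eq:c-dfn} together with properties (1)--(3) of $\bar\tau$, i.e. precisely the local computation of Lemma \ref{lem:twisted-rep} and Lemma \ref{lem:h-rep}; (ii) fullness of both inner products, which follows from irreducibility of the $\rho$'s and Schur's lemma fiberwise (so $\langle-,-\rangle_{\widehat{M}\rtimes\mathfrak{Q}}$ surjects onto a dense, hence all of, $C^\infty_c$) exactly as $\operatorname{End}(V_\rho)$ is Morita equivalent to $\complex$ via $V_\rho$ in Theorem \ref{thm:gp-morita}; and (iii) everything is bounded in the bornological/Fréchet sense, which is routine since all structure maps ($T^{[\rho]}_q$ locally constant, $\phi$ smooth, $G$ finite, supports compact) are smooth and the operations are the standard convolution-type ones on proper étale groupoids \cite{me}, \cite{pptt}. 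I expect the main obstacle to be purely organizational rather than conceptual: setting up consistent conventions for the half-densities/Haar systems along $\lambda:\widehat{M}\to M$ and along $\mathfrak{Q}$ so that the associativity identity in (i) holds on the nose with the cocycles $\mathfrak{t}$ and $c$ appearing in the right places — the single genuinely new point compared to \cite{tt} being that $c$ need not be locally constant (Remark \ref{rmk:flat}), so one must keep $c$ as an honest smooth function throughout rather than absorbing it into discrete data. Since all the identities needed are the ones already proved locally in Section \ref{sec:gp-rep}, I would state the bimodule and its two pairings explicitly and then say that the checks reduce to those lemmas, omitting the routine calculation.
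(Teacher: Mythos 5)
There is a genuine gap in the choice of the equivalence bimodule. You take $\calv=\Gamma_c(\widehat{\calv}\otimes\lambda^*\Omega^{1/2})$, i.e.\ sections of $\widehat{\calv}$ over the \emph{unit space} $\widehat{M}$, with the right-hand algebra acting by twisted convolution ``on the $\widehat{M}$-factor''. This globalizes $\bigoplus_{[\rho]}V_\rho$ rather than $\bigoplus_{[\rho]}V_\rho\otimes\complex Q$, and the missing $\mathfrak{Q}$-direction is exactly what makes the module full on both sides. The paper instead pulls $\widehat{\calv}$ back along the source map to a sheaf $\widetilde{\calv}=s^*\widehat{\calv}$ on the \emph{arrow space} $\widehat{M}\rtimes\mathfrak{Q}$ and uses $\Gamma(\widetilde{\calv})$ as the bimodule, with $\mathfrak{H}_\mathfrak{t}$ acting on the left through its homomorphism to $\mathfrak{Q}$ (with a $c(q',q)$ correction) and $\widehat{M}\rtimes\mathfrak{Q}$ acting on the right by translation; this is the correct globalization of the bimodule $\bigoplus_{[\rho]}V_\rho\otimes\complex Q$ of Theorem \ref{thm:gp-morita}, which you cite but from which you then drop the $\otimes\,\complex Q$ factor.

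To see that your module cannot work, take $G$ trivial and $\mathfrak{Q}=Q\rightrightarrows pt$ a nontrivial finite group, so that $\mathfrak{t}=c$ and both algebras are $\complex(Q,\mathfrak{t})$. Then $\widehat{M}$ is a point, $\widehat{\calv}=\complex$, and your bimodule is the one-dimensional space $\complex$; the left action of $\complex(Q,\mathfrak{t})$ factors through a character, so neither inner product can be full and item (ii) of your verification package fails, while the paper's $\Gamma(s^*\widehat{\calv})=\complex Q$ is the regular equivalence bimodule. More generally, for $\mathfrak{Q}=Q\rightrightarrows pt$ one has $\operatorname{End}_{\complex(H,\mathfrak{t})}\big(\bigoplus_{[\rho]}V_\rho\big)\cong\complex^{\#\{Q\text{-orbits on }\widehat{G}^\mathfrak{t}\}}$ by Schur's lemma, which is commutative and in general far smaller than $C(\widehat{G}^\mathfrak{t}\rtimes Q,c)$. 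The rest of your outline --- the crossed-product decompositions, the two inner products, the reduction of associativity and fullness to the computations of Lemmas \ref{lem:twisted-rep} and \ref{lem:h-rep}, and the bornological boundedness --- is the right plan and agrees with the paper's, but it must be run on $\Gamma(s^*\widehat{\calv})$ over $\widehat{M}\rtimes\mathfrak{Q}$, not on sections over $\widehat{M}$.
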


\begin{proof}
We explain the construction of the Morita equivalence bimodule between the algebras $(C^\infty_c(\mathfrak{H}), \circ_\mathfrak{t})$ and $(C^\infty_c(\widehat{M}\rtimes \mathfrak{Q}), \circ_c)$.

We pull back the sheaf $\widehat{\calv}$ on $\widehat{M}$ to a sheaf $\widetilde{\calv}$ on $\widehat{M}\rtimes \mathfrak {Q}$ via the source map $s$ from $\widehat{M}\rtimes \mathfrak{Q}$ to $\widehat{M}$.  Composing $s$ with the projection map $\pi$ from $\widehat{M}$ to $M$ gives a map $j: \widehat{M}\rtimes \mathfrak{Q}\to M$.  Via the homomorphism from $\mathfrak{H}_\mathfrak{t}$ to $\mathfrak{Q}$, the groupoid $\mathfrak{H}_\mathfrak{t}$ naturally acts on $\widehat{M}\rtimes \mathfrak{Q}$.  Hence, $\widetilde{\calv}$ is equipped with an $\mathfrak{H}_\mathfrak{t}$ action as follows. Consider $([\rho],q)\in \widehat{M}\rtimes \mathfrak{Q}$, and $(\lambda, g, q')\in \mathfrak{H}_t$ such that $t(q')=s(q)$. For $\xi \in \widetilde{\calv}|_{([\rho], q)}$, we set $(\lambda, g, q')\cdot \xi$ to be an element in $\widetilde{\calv}_{(q'([\rho]), q'q)}$ defined by
\[
c(q',q)(\lambda, g, q')\cdot \xi,
\]
where $\xi\in \widetilde{\calv}|_{([\rho], q)}=s^* \widehat{\calv}|_{([\rho], s(q))}$ is mapped to $(\lambda, g, q')\cdot \xi\in \widetilde{\calv}|_{(q'([\rho]), q'q)}=s^*\widehat{ \calv}|_{(q'([\rho]), s(q'))}$ via the $c^{-1}$-twisted $\widehat{M}\rtimes \mathfrak{H}_\mathfrak{t}$-sheaf structure on $\calv$.  This makes the space of sections $\Gamma(\widetilde{\calv})$ of $\widetilde{\calv}$ into a left $\mathfrak{H}_\mathfrak{t}$-module and therefore also a left $(C^\infty(\mathfrak{H}), \circ_\mathfrak{t})$-module.

On the other hand, the groupoid $\widehat{M}\rtimes \mathfrak{Q}$ acts on $\widehat{M}\rtimes \mathfrak{Q}$ and therefore also on $\widetilde{\calv}$ by right translation. Therefore, the space $\Gamma(\widetilde{\calv})$ is a right $c$-twisted $\widehat{M}\rtimes\mathfrak{Q}$-module and therefore a right $(C^\infty_c(\widehat{M}\rtimes \mathfrak{Q}), \circ_c)$-module.

It is not hard to see that the left $\mathfrak{H}_\mathfrak{t}$ action on $\Gamma(\widetilde{\calv})$ commutes with the right $\widehat{M}\rtimes \mathfrak{Q}$ action. And therefore $\Gamma(\widetilde{\calv})$ is a left $(C^\infty_c(\mathfrak{H}), \circ_\mathfrak{t})$ and right $(C^\infty_c(\widehat{M}\rtimes \mathfrak{Q}), \circ_c)$ bimodule.  One can check that $\Gamma(\widetilde{\calv})$ is a Morita equivalence bimodule by reducing it to a small neighborhood and applying Theorem \ref{thm:gp-morita}. We refer the reader to \cite[Thm. 4.8]{tt} for more details.
\end{proof}

Cyclic homology and Hochschild homology are invariant under Morita equivalence. As a corollary of Theorem \ref{thm:gpd-morita}, we have the following result
\begin{cor}
\begin{eqnarray*}
&&HH_\bullet((C^\infty_c(\mathfrak{H}), \circ_\mathfrak{t}))\cong HH_\bullet((C^\infty_c(\widehat{M}\rtimes \mathfrak{Q}), \circ_c))\\
&&HC_\bullet((C^\infty_c(\mathfrak{H}), \circ_\mathfrak{t})) \cong HC_\bullet((C^\infty_c(\widehat{M}\rtimes \mathfrak{Q}), \circ_c))\\
\end{eqnarray*}
\end{cor}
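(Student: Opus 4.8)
The plan is to deduce the Corollary formally from Theorem~\ref{thm:gpd-morita}: once $(C^\infty_c(\mathfrak{H}), \circ_\mathfrak{t})$ and $(C^\infty_c(\widehat{M}\rtimes \mathfrak{Q}), \circ_c)$ are known to be Morita equivalent as bornological algebras, both displayed isomorphisms follow from the Morita invariance of Hochschild and cyclic homology in the bornological setting. As recalled above, the two algebras are Fr\'echet, hence bornological, algebras in the sense of \cite{me}, \cite{pptt}, and the bimodule $\Gamma(\widetilde{\calv})$ produced in the proof of Theorem~\ref{thm:gpd-morita} is the equivalence bimodule. First I would record the equivalence as a \emph{bornological Morita context}: alongside $\Gamma(\widetilde{\calv})$, viewed as a left $(C^\infty_c(\mathfrak{H}),\circ_\mathfrak{t})$- and right $(C^\infty_c(\widehat{M}\rtimes \mathfrak{Q}),\circ_c)$-module, I would write down the dual module $\Gamma(\widetilde{\calv})^{\vee}$ of sections of the conjugate bundle, carrying the actions with handedness reversed, together with the pairing and copairing assembled fibrewise from the inner products on the spaces $V_\rho$; the two composites are then identified, after completing over the projective bornological tensor product, with the respective algebras.

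The second ingredient is the invariance statement. For bornological algebras satisfying the $H$-unitality hypothesis of \cite{me}, Hochschild homology $HH_\bullet$ and cyclic homology $HC_\bullet$, computed from the non-unital bar and cyclic bicomplexes with the completed bornological tensor product, are functorial and invariant under a bornological Morita context, the comparison being induced on chains by the generalized trace maps attached to the context, exactly as in the purely algebraic case. Thus the only remaining point is the $H$-unitality of $(C^\infty_c(\mathfrak{H}),\circ_\mathfrak{t})$ and $(C^\infty_c(\widehat{M}\rtimes \mathfrak{Q}),\circ_c)$; this is standard for convolution algebras of proper \'etale Lie groupoids, since any compact family of sections is absorbed by a bump function supported near the unit space. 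Feeding the context of the previous paragraph into the invariance theorem then produces the two isomorphisms simultaneously.

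The step I expect to cost the most effort is not the homological algebra, which is formal, but verifying that the Morita equivalence of Theorem~\ref{thm:gpd-morita} actually meets the hypotheses under which bornological cyclic homology is invariant: that $\Gamma(\widetilde{\calv})$ is a projective bornological module over each of the two algebras, and that the composites of $\Gamma(\widetilde{\calv})$ and $\Gamma(\widetilde{\calv})^{\vee}$ recover the two algebras as bornological, not merely algebraic, bimodules. Since $\widetilde{\calv}$ is built fibrewise over $M$ from the finite group $G$ and finite-dimensional endomorphism algebras, I would settle these conditions by choosing a partition of unity on $M$ subordinate to the cover of Lemma~\ref{lem:coboundary} and reducing, chart by chart, to the finite-dimensional situation of Theorem~\ref{thm:gp-morita}, where the required identifications are immediate. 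Granting that reduction, the Corollary follows; for the bornological bookkeeping I would refer to \cite{me} and to the parallel argument behind \cite[Thm.~4.8]{tt}.
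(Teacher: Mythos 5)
Your proposal is correct and follows the same route as the paper, which simply invokes the Morita invariance of Hochschild and cyclic homology together with Theorem \ref{thm:gpd-morita}; the additional care you take with $H$-unitality and the bornological hypotheses is more detail than the paper supplies, but it is not a different argument.
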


In \cite{tu-xu}, the (periodic) cyclic homology of an $S^1$ gerbe $\mathfrak{X}_c$ over an orbifold $\mathfrak{X}$ is computed to be the compactly supported twisted cohomology groups $H^\bullet_{\rm cpt}(\mathfrak {X}, c)$ introduced in \cite[Def. 3.10]{tu-xu}. So we conclude this section with the following corollary.
\begin{cor}
\[
H^\bullet_{\rm cpt}(\Y, \mathfrak{t})=H^\bullet_{\rm cpt}(\widehat{\Y}, c), \qquad \bullet=\text{ even, odd}.
\]
\end{cor}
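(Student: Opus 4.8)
The plan is to deduce this purely formally from Theorem \ref{thm:gpd-morita} together with the computation of Tu--Xu \cite{tu-xu}. First I would invoke \cite[Def. 3.10]{tu-xu} and the main computation of \cite{tu-xu}: for an $S^1$-gerbe $\mathfrak{X}_c$ over an orbifold $\mathfrak{X}$ presented by a proper \'etale groupoid, the periodic cyclic homology of the $c$-twisted convolution algebra $C^\infty_c$ of the presenting groupoid is isomorphic to the compactly supported twisted cohomology $H^\bullet_{\mathrm{cpt}}(\mathfrak{X}, c)$, with the $\integers/2$-grading matching the even/odd decomposition. Applying this to the presentation $\mathfrak{H}_\mathfrak{t}$ of $(\Y, \mathfrak{t})$ gives
\[
HP_\bullet\big((C^\infty_c(\mathfrak{H}), \circ_\mathfrak{t})\big)\cong H^\bullet_{\mathrm{cpt}}(\Y, \mathfrak{t}),
\]
and applying it to the presentation $\widehat{M}\rtimes \mathfrak{Q}$ of $(\widehat{\Y}, c)$ — which is a genuine proper \'etale groupoid presenting the orbifold $\widehat{\Y}$ with $c$ an $S^1$-gerbe, as established in Sec. \ref{subsec:gpd-dual} — gives
\[
HP_\bullet\big((C^\infty_c(\widehat{M}\rtimes \mathfrak{Q}), \circ_c)\big)\cong H^\bullet_{\mathrm{cpt}}(\widehat{\Y}, c).
\]

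Next I would use that periodic cyclic homology of bornological algebras is invariant under Morita equivalence (this is the same input used in the preceding corollary for $HH_\bullet$ and $HC_\bullet$, and passes to $HP_\bullet$ by taking the limit over the $SBI$-sequence). By Theorem \ref{thm:gpd-morita} the two twisted groupoid algebras $(C^\infty_c(\mathfrak{H}), \circ_\mathfrak{t})$ and $(C^\infty_c(\widehat{M}\rtimes \mathfrak{Q}), \circ_c)$ are Morita equivalent as bornological algebras, hence their periodic cyclic homologies agree. Chaining the three isomorphisms yields
\[
H^\bullet_{\mathrm{cpt}}(\Y, \mathfrak{t})\cong HP_\bullet\big((C^\infty_c(\mathfrak{H}), \circ_\mathfrak{t})\big)\cong HP_\bullet\big((C^\infty_c(\widehat{M}\rtimes \mathfrak{Q}), \circ_c)\big)\cong H^\bullet_{\mathrm{cpt}}(\widehat{\Y}, c)
\]
for $\bullet$ even and odd, which is the claim.

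The only real point requiring care — the rest being a concatenation of cited results — is checking that the hypotheses of \cite{tu-xu} are met on the dual side: namely that $\widehat{M}\rtimes \mathfrak{Q}$ is a proper \'etale groupoid (properness is inherited from $\mathfrak{Q}$ since $\widehat{M}\to M$ has finite fibers, and \'etaleness is clear), and that $c$ is a smooth $S^1$-valued $2$-cocycle presenting an honest $S^1$-gerbe over $\widehat{\Y}$, which is exactly the content of Sec. \ref{subsec:gpd-dual}. I would also remark that, unlike in \cite{tt}, $c$ need not be flat (Remark \ref{rmk:flat}), but this is irrelevant here since \cite{tu-xu} computes the cyclic homology of $C^\infty_c(\widehat{M}\rtimes \mathfrak{Q})$ for an arbitrary $S^1$-gerbe, not only flat ones. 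With these observations in place the corollary follows; I would present it in two or three lines of text citing \cite{tu-xu} and Theorem \ref{thm:gpd-morita}.
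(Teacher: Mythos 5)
Your proposal is correct and follows essentially the same route as the paper: the corollary is obtained exactly by combining the Tu--Xu identification of (periodic) cyclic homology of the twisted convolution algebra with $H^\bullet_{\rm cpt}$, the Morita invariance of cyclic homology, and Theorem \ref{thm:gpd-morita}. Your additional checks (properness and \'etaleness of $\widehat{M}\rtimes \mathfrak{Q}$, smoothness of $c$, and the irrelevance of flatness) are sound and merely make explicit what the paper leaves implicit.
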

\section{Appendix}
A language equivalent to that of Lie groupoids used above is that of differentiable stacks.  While perhaps more abstract and technically demanding it does offer a more conceptual, if less transparent, point of view.  In the process of writing this text the authors have faced a choice of selecting a language to use and have decided against stacks.  However this appendix is provided to outline some of the main ideas of the paper, without proper stack theoretic justification, to those more comfortable with this point of view.  To use an analogy, in the Appendix we deal with principal bundles, whereas in the main text we consider the associated gluing data. 

Recall that a $G$-bitorsor $X$ is a set with commuting left and right $G$ actions such that both actions are free, proper, and transitive. A morphism between two $G$-bitorsors $X_1$ and $X_2$ is a map $f:X_1\to X_2$ satisfying
\[
f(g\cdot x \cdot g')=g\cdot f(x)\cdot g'.
\]
Let $\bit(G)$ be the collection of all $G$-bitorsors. There is a natural  associative product on $\bit(G)$ defined as follows. For $X_1, X_2\in \bit(G)$, consider the diagonal $G$ action on $X_1 \times X_2$ by
\[
(g, x_1, x_2)\mapsto (x_1\cdot g^{-1}, g\cdot x_2).
\]
Define  $X_1\times_G X_2$ to be the quotient of $X_1\times X_2$ by the above $G$ action. It is straightforward to check that the left $G$ action on $X_1$ and the right $G$ action on $X_2$ make $X_1\times_G X_2$ into a $G$-bitorsor.  The group $G$ with the left and right translation $G$-action is the identity of the product.  Furthermore, if $f_i:X_i\to Y_i$, $i=1,2$ are morphisms of $G$-bitorsors, $f_1\times f_2:X_1\times_G Y_1\to X_2\times_G Y_2$ is defined to be $f_1\times f_2(x_1, x_2)=[(f_1(x_1),f_2(x_2))]\in Y_1\times_G Y_2$.  The inverse of $X$ is the same as a set, with the right and left actions interchanged.  Accordingly, $(\bit(G), \times_G)$ is a $2$-group; it should be viewed as a categorification of the outer automorphism group of $G$.    It can be made into a differentiable stack in an obvious manner.  Note that its Lie groupoid model is $[Aut(G)/G]$.

Let $\mathfrak{Rep}(G)$ be the category of  finite dimensional $G$-representations over $\mathbb{C}$ with morphisms being $G$-equivariant linear maps.  The $2$-group $\bit(G)$ acts on $\mathfrak{Rep}(G)$ and on the full subcategory $\mathfrak{IRep}(G)$ of irreducible representations as follows: $X\cdot V=X\times_G V$, where
\[
 X\times_G V:=X\times V/(xg,v)\sim (x, gv).
\]

Let $\widehat{G}$ be the set of isomorphism classes of irreducible $G$-representations. Equip $\widehat{G}$ with the discrete topology. We observe that the $\bit(G)$ action on $\mathfrak{IRep}(G)$ maps isomorphic irreducible $G$-representations to isomorphic ones. Hence $\bit(G)$ also descends to an action on $\widehat{G}$.  The natural forgetful map $\pi:\mathfrak{IRep}(G)\to \widehat{G}$ has an additional important bit of structure.  Namely, consider the category $\call$ of one dimensional vector spaces over $\mathbb{C}$. As the tensor product of lines is still a line, the tensor product makes $\call$ into a 2-group, with duality serving as inversion. Then $\call$ acts on the fibers of $\pi$ via the tensor product, and furthermore $\pi$ has the structure of a $\bit(G)$-equivariant principal $\call$ bundle.  Those who are less comfortable with $2$-groups will benefit from thinking about plain groups, here and below.  In particular when considering a principal $A$-bundle $\calp$ with $A$ a $2$-group, and an $A$-module category $C$, one may just as with groups, perform the fiber replacement construction $\calp\times_A C$.

\subsection{Untwisted case}
This section deals with the differentiable stack version of groupoid extensions that were used in the main body of the paper.  More precisely,  that is what the $\bit(G)$-principal bundle below corresponds to.

Let $\tor(G)$ denote $G$-torsors.  Then $\bit(G)$ obviously acts on $\tor(G)$ and in fact is $Aut(\tor(G))$. By a $G$-gerbe on a differentiable stack $\mathcal{M}$ we mean a $\bit(G)$-principal bundle $\calp_B$ on $\mathcal{M}$.   Following \cite{la-st-xu}, we can present $\mathcal{M}$ by a Lie groupoid $\mathfrak{Q}\rightrightarrows M$, and a $G$-gerbe on $\mathcal{M}$ by a groupoid extension $\mathfrak{H}$ of $\mathfrak{Q}$ by a bundle of groups over $M$. It is perhaps more natural to consider the equivalent data of $$\calp=\calp_B\times_{\bit(G)}\tor(G)$$ over $\calm$, as the gerbe itself.  The analogy is the difference between a bundle of frames and a vector bundle. Note that the group $G$ itself can vary over $\calm$, within reason.

Then the dual of $\calp$ relative to $\calm$ is $$\widehat{\calp}=\calp_B\times_{\bit(G)}\widehat{G}$$ that has the additional structure of a $\call$ principal bundle, i.e., an $S^1$-gerbe $$\widetilde{\calp}=\calp_B\times_{\bit(G)}\mathfrak{IRep}(G)$$ over it.

We can then compare the sheaves on $\calp$ to the $\widetilde{\calp}$ twisted sheaves on $\widehat{\calp}$.  More precisely, we claim an equivalence of categories $$\Gamma(\calp,\calp\times \text{Vect}_{\mathbb{C}})\simeq\Gamma(\widehat{\calp}, \widetilde{\calp}\times_{\call}\text{Vect}_{\mathbb{C}})$$ where $\text{Vect}_{\mathbb{C}}$ denotes the category of finite dimensional vector spaces over $\mathbb{C}$.  The equivalence follows, roughly, from the string of equivalences:
\begin{align*}
\Gamma(\calp,\calp\times \text{Vect}_{\mathbb{C}})&\simeq \Gamma(\calm, \calp_B\times_{\bit(G)}\mathfrak{Rep}(G))\\
&\simeq\Gamma(\calm, \calp_B\times_{\bit(G)}\Gamma(\widehat{G},\mathfrak{IRep}(G)\times_{\call}\text{Vect}_{\mathbb{C}}))\\
&\simeq\Gamma(\calp_B\times_{\bit(G)}\widehat{G}, \calp_B\times_{\bit(G)}(\mathfrak{IRep}(G)\times_{\call}\text{Vect}_{\mathbb{C}}))\\
&\simeq\Gamma(\widehat{\calp}, (\calp_B\times_{\bit(G)}\mathfrak{IRep}(G))\times_{\call}\text{Vect}_{\mathbb{C}})\\
&\simeq\Gamma(\widehat{\calp}, \widetilde{\calp}\times_{\call}\text{Vect}_{\mathbb{C}}).
\end{align*}

\subsection{Twisted case}
Here we deal with the differentiable stack version of $S^1$ central extensions of groupoid extensions.  Namely, we treat the case of twisted $G$-gerbes using considerations very similar to those of the untwisted case.  More precisely, let $G_\mathfrak{t}$ denote a central extension of $G$ by $S^1$.  Again, this data may in principal vary over the base.  Use a slight modification of the construction of $\bit(G)$ to define $\bit_\mathfrak{t}(G)$.  Namely, $\bit_\mathfrak{t}(G)$ consists of $G_\mathfrak{t}$-bitorsors $X$ such that $xc=cx$ for all $c\in S^1$ and any, and hence all, $x\in X$.

Denote by $\mathfrak{Rep}_\mathfrak{t}(G)$ the full subcategory of $\mathfrak{Rep}(G_\mathfrak{t})$ that consists of representations with $S^1$ acting with weight 1. Then as before we have a natural action of  $\bit_\mathfrak{t}(G)$ on $\mathfrak{Rep}_\mathfrak{t}(G)$ and on its full subcategory $\mathfrak{IRep}_\mathfrak{t}(G)$.  We denote the isomorphism classes of the latter by $\widehat{G}^\mathfrak{t}$.

Note that we have a natural $2$-group map from $\bit_\mathfrak{t}(G)$ to $\bit(G)$, mapping $X$ to $X/S^1$, and similarly a functor $q$ from $\tor(G_\mathfrak{t})$ to $\tor(G)$.  Observe that $q$ has the structure of a $\bit_\mathfrak{t}(G)$-equivariant $\call$-principal bundle.  Thus a twisted $G$-gerbe on $\mathcal{M}$, which is a principal $\bit_\mathfrak{t}(G)$ bundle $\calp_B$ on $\mathcal{M}$, is a special case of an $S^1$-gerbe $\calp'=\calp_B\times_{\bit_\mathfrak{t}(G)}\tor(G_\mathfrak{t})$ over a $G$-gerbe $\calp=\calp_B\times_{\bit_\mathfrak{t}(G)}\tor(G)$ over $\mathcal{M}$.

We construct the dual of the twisted $G$-gerbe in a very similar manner to the untwisted case.  Namely, the dual data consists of $$\widehat{\calp}=\calp_B\times_{\bit_\mathfrak{t}(G)}\widehat{G}^\mathfrak{t}$$ and a $\call$-principal bundle, i.e., an $S^1$-gerbe $$\widetilde{\calp}=\calp_B\times_{\bit_\mathfrak{t}(G)}\mathfrak{IRep}_\mathfrak{t}(G)$$ over it.

Comparing the $\calp'$-twisted sheaves on $\calp$ to the $\widetilde{\calp}$-twisted sheaves on $\widehat{\calp}$ we have an equivalence $$\Gamma(\calp,\calp'\times_{\call} \text{Vect}_{\mathbb{C}})\simeq\Gamma(\widehat{\calp}, \widetilde{\calp}\times_{\call}\text{Vect}_{\mathbb{C}}).$$ This equivalence, just as the one above, is a consequence of the string of equivalences:
\begin{align*}
\Gamma(\calp,\calp'\times_{\call} \text{Vect}_{\mathbb{C}})&\simeq \Gamma(\calm, \calp_B\times_{\bit_\mathfrak{t}(G)}\mathfrak{Rep}_\mathfrak{t}(G))\\
&\cdots\\
&\simeq\Gamma(\widehat{\calp}, \widetilde{\calp}\times_{\call}\text{Vect}_{\mathbb{C}}).
\end{align*}

Similarly, the remaining questions considered in this paper can be directly translated  into this setting.

\end{document}